\documentclass[amssymb,verbatim,10pt]{amsart}
\usepackage[fontsize = 10bp]{fontsize}
\usepackage[font=footnotesize,labelfont=bf]{caption}
\usepackage{scalerel}
\usepackage{stackengine,wasysym}
\usepackage{tikz}        
\usepackage{float}

\usepackage{amssymb,latexsym, amsmath, amscd, array, graphicx,pb-diagram}
\usepackage{hyperref}
\usepackage{color}
\usepackage{mathtools}
\usepackage{extarrows}

\usepackage{tikz-cd}

\makeatletter
\def\@tocline#1#2#3#4#5#6#7{\relax
  \ifnum #1>\c@tocdepth % then omit
  \else
    \par \addpenalty\@secpenalty\addvspace{#2}%
    \begingroup \hyphenpenalty\@M
    \@ifempty{#4}{%
      \@tempdima\csname r@tocindent\number#1\endcsname\relax
    }{%
      \@tempdima#4\relax
    }%
    \parindent\z@ \leftskip#3\relax \advance\leftskip\@tempdima\relax
    \rightskip\@pnumwidth plus4em \parfillskip-\@pnumwidth
    #5\leavevmode\hskip-\@tempdima
      \ifcase #1
       \or\or \hskip 1em \or \hskip 2em \else \hskip 3em \fi%
      #6\nobreak\relax
    \hfill\hbox to\@pnumwidth{\@tocpagenum{#7}}\par% <---- \dotfill -> \hfill
    \nobreak
    \endgroup
  \fi}
\makeatother

\usepackage{color}
\hypersetup{
    colorlinks=true,
    linkcolor=red,
    urlcolor=black,
    citecolor=blue
           }
           
\usepackage{amsthm}
\usepackage{hyperref}
\usepackage[all]{xy}
\usepackage[scr]{rsfso}

\numberwithin{equation}{section}

\swapnumbers

\theoremstyle{plain}

\newtheorem{thm}{Theorem}[section]
\newtheorem{theorem}[thm]{Theorem}

\newtheorem{lemma}[thm]{Lemma}

\newtheorem{prop}[thm]{Proposition}
\newtheorem{cor}[thm]{Corollary}

\theoremstyle{definition}

\newtheorem{remark}[thm]{Remark}

  \DeclareMathOperator{\rank}{rank}

 \newcommand{\Wi}{\widetilde}

\DeclareMathOperator{\cd}{{\rm cd}}

\DeclareMathOperator{\cat}{{\mathsf{cat}}}

\DeclareMathOperator{\Ker}{{\rm Ker}}

\usepackage{enumerate}

\def\pr{\protect\operatorname{pr}}

\newcommand \pa[2]{\frac{\partial #1}{\partial #2}}

\def\C{{\mathbb C}}
\def\Z{{\mathbb Z}}

\def\R{{\R}}
\def\R{{\mathbb R}}

\def\1{\hbox{\rm\rlap {1}\hskip.03in{\rom I}}}
\def\Bbbone{{\rm1\mathchoice{\kern-0.25em}{\kern-0.25em}
{\kern-0.2em}{\kern-0.2em}I}}

\def\pa{\partial}

\def\wt{\widetilde}
\def\wh{\widehat}

\def\ov{\overline}

\usepackage{comment}

\long\def\forget#1\forgotten{} %

\newcommand\ver[1]{\marginpar{\tiny Changed in Ver \VER}}

\date{\today}

\begin{document}

\title[On totally $C$-symplectically aspherical manifolds]{On totally $c$-symplectically aspherical manifolds}

\author[L.~F.~Di~Cerbo]{Luca~F.~Di~Cerbo}

\author[A.~Dranishnikov]{Alexander~Dranishnikov} 

\author[E.~Jauhari]{Ekansh~Jauhari}

\address{Luca F. Di Cerbo, Department of Mathematics, University of Florida, 358 Little Hall, Gainesville, FL 32611-8105, USA.} 
\email{ldicerbo@ufl.edu}

\address{Alexander N. Dranishnikov, Department of Mathematics, University of Florida, 358 Little Hall, Gainesville, FL 32611-8105, USA.}
\email{dranish@ufl.edu}

\address{Ekansh Jauhari, Department of Mathematics, University of Florida, 358 Little Hall, Gainesville, FL 32611-8105, USA.}
\email{ekanshjauhari@ufl.edu}

\begin{abstract}
We construct examples of $c$-symplectically aspherical near-symplectic manifolds with non-trivial second homotopy group.
\end{abstract}

\subjclass[2020]
{Primary
57N65,      %Algebraic topology of manifolds
57R17,      %Symplectic and contact topology in high or arbitrary dimension
Secondary
57M50,      %General geometric structures on low-dimensional manifolds
55S15.      %Symmetric products and cyclic products in algebraic topology
}

\keywords{$c$-symplectically aspherical manifold, near-symplectic manifold, symmetric square of a surface, sphere complement, generalized connected sum.}

%\dedicatory{Dedicated to the memory of Yuli Rudyak}

\maketitle
%\tableofcontents

\section{Introduction}
Let $M$ be a closed smooth orientable $2n$-manifold for some $n\ge 1$. A closed differential $2$-form $\omega$ on $M$ is called \emph{aspherical} if
\[
\int_{S^2}f^*\omega=0
\]
for each smooth map $f\colon S^2\to M$. In other words, $\omega$ is an aspherical form if it evaluates trivially on each real spherical homology class of $M$, or equivalently, if the closed $2$-form $\pi^*\omega$ on $\wt M$, pulled back along the universal Riemannian covering $\pi\colon \wt M\to M$, is exact.

Aspherical symplectic forms were first introduced by Floer in his attempts to attack the Arnol'd conjecture~\cite{Floer}. Indeed, the latter was proven true for most \emph{symplectically aspherical manifolds} (i.e., closed manifolds equipped with aspherical symplectic forms), see~\cite{RO}. It is also well-known that Floer homology, the Lusternik--Schnirelmann theory for Lagrangian intersections, and the theory of $J$-holomorphic curves are simpler for symplectically aspherical manifolds, so naturally, the symplectic asphericity condition has appeared in a number of classical theorems in symplectic topology (see~\cite{KRT2} and the references therein).

Besides closed symplectic manifolds with vanishing second homotopy group that are symplectically aspherical for trivial reasons, there are many examples of \emph{non-trivial} symplectically aspherical manifolds (i.e., those with non-trivial $\pi_2$); see~\cite{Go2,IKRT,FMS,KRT1,DCDJ,DCDJ2} for some examples and their other interesting features. In this paper we study a related condition for \emph{$c$-symplectic} forms. Recall that a closed $2$-form on a closed smooth orientable $2n$-manifold is $c$-symplectic if its de Rham cohomology class has non-zero $n$-th power~\cite{LO,CO}. We call a manifold \emph{totally $c$-symplectically aspherical} if every $c$-symplectic form on it is aspherical. This condition is strongly constrained by the spherical homology of the manifold; in particular, it is automatic when every real spherical homology class in degree two vanishes.

Our goal is to construct explicit totally $c$-symplectically aspherical manifolds which nevertheless have non-trivial second homotopy group and carry near-symplectic structures. Near-symplectic structures, which are closed $2$-forms that are symplectic outside a codimension-$3$ submanifold, provide a natural setting for this problem, especially in dimension~$4$; see~\cite{Taubes2,Honda,GK,ADK,Vera}. While near-symplectic and $c$-symplectic structures are well studied, it seems that non-trivial aspherical $c$-symplectic structures on non-symplectic near-symplectic manifolds have received comparatively little attention.

Our main examples are a family of closed oriented smooth $4$-manifolds
\[
S_g:=SP^2(M_g)\ \#_{\Sigma}\ \ov{SP^2(M_g)},\qquad g\ge 3,
\]
constructed as generalized connected sums involving the symmetric squares of closed Riemann surfaces. The latter have been extensively studied in topology and geometry; see, for instance,~\cite{Mac,ACGH,BGZ,OS,DCDJ}. We prove in Theorem~\ref{thm: main} that each $S_g$ admits both $c$-symplectic and near-symplectic structures, that
\[
\pi_2(S_g)\otimes\R\ne 0,
\]
and that the real second Hurewicz map is trivial. Consequently every closed differential $2$-form on $S_g$ is aspherical, and in particular $S_g$ is a non-trivial totally $c$-symplectically aspherical near-symplectic $4$-manifold. The manifolds $S_g$ are never complex, and for even $g$ they do not admit almost complex, hence symplectic, structures.

The verification of these properties constitutes the bulk of the paper and uses several topological tools, including generalized connected sums, Whitney's embedding theorem, Mayer--Vietoris arguments, covering-space methods, and spectral sequences. The presence of non-trivial $\pi_2$ together with a trivial real Hurewicz map is the key mechanism behind total $c$-symplectic asphericity. Taking products with tori then yields examples in every even dimension at least four; see Corollary~\ref{cor: each even dim}.

Thus, the results of this paper show that total $c$-symplectic asphericity is compatible with non-trivial second homotopy and with near-symplectic geometry, while remaining sufficiently restrictive to impose strong conditions on spherical homology.

\subsection*{Arrangement of the paper}
We begin with a brief review of near-symplectic and aspherical $c$-symplectic forms in Section~\ref{sec: preliminaries}, where we also record facts about the symmetric squares of closed Riemann surfaces. In Section~\ref{subsec: construction} we construct the manifolds $S_g$, state the main theorem, and establish their $c$-symplectic structures together with several basic topological properties. Section~\ref{sec: homotopy of N} studies the homotopy and homology of the sphere complement used in the construction. In Section~\ref{sec: proof section} we compute the relevant (co)homology, analyze the real Hurewicz map and the second homotopy group, and complete the proof of Theorem~\ref{thm: main}; we also prove the higher-dimensional product consequence. Finally, Section~\ref{sec: exceptional} discusses the sharpness of the construction in dimension and genus.

\section{Background and preliminary facts}\label{sec: preliminaries}
This section gives a quick review of the main objects of this paper and records some facts that will be useful later. 

\subsection{Near-symplectic forms}\label{subsec: near-symplectic} 
Let $M$ be a closed smooth oriented $2n$-manifold for some $n\ge 2$. A closed differential $2$-form $\omega$ on $M$ is called \emph{near-symplectic} if it is symplectic outside a codimension-$3$ submanifold of $M$ where it vanishes transversely, i.e., for every $p\in M$, either $\omega^n_{p}>0$, or $\omega^{n-1}_p=0$ and $\omega^{n-2}_p\ne 0$ at a codimension-$3$ submanifold of $M$, see~\cite{ADK,Vera}.

In dimension $4$, a near-symplectic form on $M$ exists if and only if there is a de Rham cohomology class whose square induces the given orientation of $M$, see~\cite{Taubes2,Honda}. A near-symplectic form determines a \emph{singular Lefschetz pencil} having quadratic singularities along its $1$-dimensional zero set \cite{ADK,AK}; this is analogous to Donaldson's famous result in the symplectic setting~\cite{donaldson}. In turn, a near-symplectic structure is implied by the existence of certain singular Lefschetz pencils and degree $2$ (co)homology classes~\cite{ADK}; this too is analogous to Gompf's celebrated result in the symplectic setting~\cite{GS}. We refer to~\cite{GK,EF} for other noteworthy results concerning near-symplectic $4$-manifolds.

\subsection{Aspherical $c$-symplectic forms}\label{subsec: cohomologically symplectic} Let $M$ be a closed smooth orientable $2n$-manifold for some $n\ge 1$. A closed differential $2$-form $\omega$ on $M$ is called \emph{$c$-symplectic} (short for cohomologically symplectic) if its de Rham cohomology class $[\omega]\in H^2(M;\R)$ satisfies $[\omega]^n\ne 0$ (see, for instance,~\cite{LO}). These forms mimic the cohomological property of symplectic forms, and as such are independent of the existence of almost complex structures (consider, for example, $\C P^2\#\C P^2$).
%has a $c$-symplectic form, but no almost complex structure compatible with its orientation (see~\cite[Proposition~1.3.1]{Aud} for a generalization).

The existence of aspherical $c$-symplectic forms on a smooth manifold severely constrains its topology and geometry. For instance, a $c$-symplectically aspherical manifold $M$ is \emph{essential} (in the sense of Gromov~\cite{Gr-ess}), $S^1$-actions on $M$ are not Hamiltonian~\cite{LO}, and $\pi_1(M)$ is always infinite. In fact, one has 
\begin{equation}\label{eq: cd}
\cd_\R(\pi_1(M))\ge \dim_\R(M),
\end{equation}
for the real cohomological dimension, see~\cite[Lemma~4.2]{LO}. We refer to~\cite{LO,RO,IKRT,KRT2,Borat} for other such constraints and interesting results.

\subsection{Symmetric squares of surfaces}\label{sec: SP}
%As advertised earlier, the building blocks of our totally $c$-symplectically examples in dimension $4$ are the symmetric squares of closed Riemann surfaces. 
For a surface $M_g$ of genus $g\ge 0$, its \emph{symmetric square} $SP^2(M_g)$ is the orbit space of the natural $\Z_2$-action on $M_g\times M_g$. It is a closed smooth orientable $4$-manifold. Indeed, $SP^2(M_0)\cong \C P^2$, $SP^2(M_1)$ is a $\C P^1$-bundle over $T^2$, and $SP^2(M_2)\cong T^4\# \ov{\C P^2}$. The Jacobian of $SP^2(M_g)$ coincides with that of $M_g$, so the Albanese map is precisely the Abel--Jacobi map 
\[
\mu_2\colon SP^2(M_g)\to T^{2g}:=\C^g/\Lambda, \quad \Lambda\cong\Z^{2g},
\]
see~\cite{ACGH}. We refer to~\cite{BGZ} (and the references therein) for various other interesting facts about the more general symmetric \emph{products} of surfaces.

For future use, we recall from~\cite{Mac} that if $\{a_i,b_i\mid1\le i \le g\}$ is a symplectic basis of $H_1(T^{2g};\Z)$, $c$ is the fundamental class of $M_g$, and we use the same notations for the images of these classes in $SP^2(M_g)$ under the appropriate maps, then the cohomology ring $H^*(SP^2(M_g);\Z)$ is torsion-free and is generated by the classes $\{a_i^*,b_i^*,c^*\mid 1\le i \le g\}$ (where $\ast$ in the superscript denotes the respective Hom duals in $SP^2(M_g)$), subject only to the following relation:
\[
a_{i_1}^*\cdots a_{i_\ell}^*b_{j_1}^*\cdots b_{j_m}^*(c^*-a_{k_1}^*b_{k_1}^*)\cdots(c^*-a_{k_r}^*b_{k_r}^*)(c^*)^s=0
\]
if $\ell+m+2r+s>2$ for any \emph{distinct} set of indices $i_1,\dots,i_\ell$, $j_1,\dots,j_m$, and $k_1,\dots, k_r$. It is also well-known (see, for example,~\cite{Ka2,OS,DCDJ}) that  
\[
\pi_1(SP^2(M_g))\cong \wt H_1(M_g;\Z)\cong \Z^{2g},
\]
$\pi_2(SP^2(M_g))$ is a $\Z^{2g}$-module, and there exists a generator 
\begin{equation}\label{eq: spherical homo}
u=c-\sum_{i=1}^ga_i\cdot b_i  \ \in H_2(SP^2(M_g);\R)   
\end{equation}
of real spherical homology classes of $SP^2(M_g)$. Here, $\cdot$ is the Pontryagin product. Moreover, $H_2(SP^2(M_g);\R)$ is generated by $c$ and the Pontryagin products $a_i\cdot a_j$ and $b_i \cdot b_j$ for $i < j$, and $a_i \cdot b_j$ for all $i, j$. Finally, we mention that the Poincar\'e duals $PD$ of these Pontryagin products were computed in~\cite{Ka2}. Specifically, $PD(a_i\cdot b_i)=c^*-a_i^*b_i^*=c^*+b_i^*a_i^*$; see the proof of Theorem~7.1 in~\cite{DCDJ2}.

\section{Construction of the examples}\label{subsec: construction}

In this section, we give examples of totally $c$-symplectically aspherical near-symplectic $4$-manifolds with non-trivial second homotopy group. As mentioned earlier, the verification of the asserted properties of these examples will continue in Sections~\ref{sec: homotopy of N} and~\ref{sec: proof section} as well.

%\subsection{Description of the examples}\label{subsec: describe} 
The building blocks of our $4$-dimensional examples are the symmetric products of closed Riemann surfaces. Fix some $g\ge 0$ and consider the closed smooth oriented $4$-manifold $SP^2(M_g)$; the symmetric square of a Riemann surface $M_g$ of genus $g$. There is a non-trivial spherical homology class $u\in H_2(SP^2(M_g);\R)$, see~\eqref{eq: spherical homo}. Let 
\[
\Sigma\subset SP^2(M_g)
\]
be a smoothly embedded $2$-sphere that represents $u$, and let $W\subset SP^2(M_g)$ be its tubular neighborhood that is canonically identified with the total space of the normal bundle of the smooth embedding $\Sigma\hookrightarrow SP^2(M_g)$. Define 
\[
N=SP^2(M_g)\setminus\text{Int}(W),
\]
so that the common boundary of $N$ and $W$ is a closed oriented $3$-manifold $L$.
%$L:=L_m$, which is a 3-dimensional lens space fibered over $\Sigma$ with fiber a circle $\gamma$ that represents a generator of $\pi_1(L)\cong\Z_m$. 
Take $V=\ov N$ to be the homeomorphic copy of $N$ with the opposite (or reversed) orientation, so that $\pa V=\ov{L}$. The identity map $\phi\colon L\to \ov L$ is an orientation-reversing diffeomorphism. Finally, glue $N$ and $\ov N$ along $L$ using $\phi$ to form the closed smooth oriented $4$-manifold $N\cup_\phi\ov N$, which we denote by
\[
S_g:=SP^2(M_g)\ \#_{\Sigma}\ \ov{SP^2(M_g)}.
\]
Here, $\ov{SP^2(M_g)}$ is the copy of the oriented $SP^2(M_g)$ with the opposite orientation. Moving forward, we will primarily follow the notations introduced here.

We now state the main result of this paper.

\begin{theorem}\label{thm: main}
For each $g\ge 3$, the closed oriented smooth $4$-manifold $S_g$ admits $c$-symplectic and near-symplectic structures, and satisfies
\[
\pi_2(S_g)\otimes\R\ne 0 \quad \text{and} \quad \textup{Im}(h_2\otimes 1_\R\colon \pi_2(S_g)\otimes\R\to H_2(S_g;\R))=0,
\]
where $h_2$ is the second Hurewicz map. In particular, $S_g$ for $g\ge 3$ is a non-trivial totally $c$-symplectically aspherical near-symplectic $4$-manifold.
\end{theorem}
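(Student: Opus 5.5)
The plan is to compute everything through the decomposition $S_g=N\cup_L\ov N$, using the Abel--Jacobi map for the Hurewicz statement and an $L^2$/Euler characteristic argument for $\pi_2$.

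\textbf{Step 1: the gluing locus.} First I will compute the self-intersection $u\cdot u$ from the cohomology ring recalled in Section~\ref{sec: SP} and the formula $PD(a_i\cdot b_i)=c^*+b_i^*a_i^*$. I expect
\[
u\cdot u=1-2g+g(g-1)=g^2-3g+1,
\]
which is nonzero for every $g$ and in particular for $g\ge 3$. This needs to be double-checked against the ring relations. Given $u\cdot u\ne 0$, the boundary $L$ of the tubular neighborhood $W$ is a circle bundle over $S^2$ with nonzero Euler number. Hence $L$ is a lens space and a rational homology sphere.

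\textbf{Step 2: homology and intersection form of $S_g$.} Mayer--Vietoris for $SP^2(M_g)=N\cup W$ with real coefficients identifies $H_2(N;\R)$ with $u^\perp\subset H_2(SP^2(M_g);\R)$. It also gives $H_1(N;\R)\cong\R^{2g}$. Mayer--Vietoris for $S_g=N\cup_L\ov N$ then gives $H_2(S_g;\R)\cong H_2(N;\R)\oplus H_2(\ov N;\R)$, with intersection form $Q|_{u^\perp}\oplus(-Q|_{u^\perp})$. This form has classes of positive square, since $-Q|_{u^\perp}$ is positive on part of the negative definite piece. Any class $x$ with $x^2>0$ gives a $c$-symplectic form. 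By the Taubes--Honda criterion recalled in Section~\ref{subsec: near-symplectic}, the same class gives a near-symplectic form.

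\textbf{Step 3: fundamental group and Hurewicz triviality.} I will show that the meridian circle of $\Sigma$ is trivial in $\pi_1(N)$; it is torsion in $\pi_1(L)$ and I will try to kill it using the spherical class itself. Then $\pi_1(N)\cong\pi_1(SP^2(M_g))\cong\Z^{2g}$, and van Kampen gives $\pi_1(S_g)\cong\Z^{2g}$. The Abel--Jacobi maps on the two halves agree on $L$, so they glue to a map $S_g\to T^{2g}$ inducing an isomorphism on $\pi_1$. Every spherical class therefore lies in the kernel of $H_2(S_g;\R)\to H_2(T^{2g};\R)=\Lambda^2\R^{2g}$. On $H_2(SP^2(M_g);\R)$ this kernel is exactly $\R u$, using the basis $c$, $a_i\cdot a_j$, $b_i\cdot b_j$, $a_i\cdot b_j$ and the fact that $c$ maps to $\sum a_ib_i$. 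Restricted to $u^\perp$ the kernel is $u^\perp\cap\R u=0$, because $u\cdot u\ne0$. Doing this on both summands shows that $h_2\otimes 1_\R$ has zero image, so every closed $2$-form is aspherical.

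\textbf{Step 4: nontriviality of $\pi_2$.} We have $\pi_2(S_g)\cong H_2(\wt S_g)$. I will argue with $L^2$-Betti numbers over the amenable infinite group $\Z^{2g}$. For such a group $b^{(2)}_0=b^{(2)}_1=0$, and by Poincaré duality $b^{(2)}_3=b^{(2)}_4=0$, so $b^{(2)}_2(\wt S_g)=\chi(S_g)$. Since $\chi(L)=\chi(S^2)=2$, we get $\chi(N)=\chi(SP^2(M_g))-2$ and
\[
\chi(S_g)=2\chi(N)=2\bigl((2g-3)(g-1)-2\bigr),
\]
using $\chi(SP^2(M_g))=\binom{3-2g}{2}$. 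This is positive exactly when $g\ge3$. Then $b^{(2)}_2>0$ forces $H_2(\wt S_g;\R)\ne0$, i.e. $\pi_2(S_g)\otimes\R\ne0$.

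I expect the main obstacle to be Step 3's $\pi_1$ computation, namely showing the meridian dies in $\pi_1(N)$. If that fails, I would instead run the spectral sequence of $\wt N\to N$ more carefully to handle a possibly larger fundamental group.
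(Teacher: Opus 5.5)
Your Step~2 is sound. It gives the $c$-symplectic and near-symplectic structures more directly than the paper's torus embedding (Proposition~\ref{prop: tori embedding}), and it only needs $u\cdot u\neq 0$. The value you expect in Step~1 is wrong, however. Since $PD(a_i\cdot b_i)=c^*-a_i^*b_i^*$, one has $c\cdot(a_i\cdot b_i)=0$ and $(a_i\cdot b_i)\cdot(a_j\cdot b_j)=-\delta_{ij}$, so $u\cdot u=1-g$ (Lemma~\ref{lem: pd u square}). Your count corresponds to taking $PD(a_i\cdot b_i)=a_i^*b_i^*$. This is not cosmetic: $|u\cdot u|=g-1\ge 2$ is exactly what breaks Step~3.

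The central claim of Step~3 is false. The meridian $\gamma$ has order exactly $g-1$ in $\pi_1(N)$, which is the content of Proposition~\ref{prop: injective}. Suppose $\gamma^k$ bounded a singular disk in $N$. Capping it with a $k$-fold branched cover of a normal disk gives a map $S^2\to SP^2(M_g)$ meeting $\Sigma$ algebraically $\pm k$ times. But any spherical class equals $tu$ in real homology, with $t=(tu)\cdot(a_1\cdot b_1)\in\Z$ by Lemma~\ref{lem: intersections}. So it meets $\Sigma$ in $t(1-g)\in(g-1)\Z$. Pushing $\Sigma$ off itself stays inside $W$ and only reproduces the relation $\gamma^{g-1}=1$ already present in $\pi_1(L)$.

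Even if $\gamma$ did die, van Kampen would give $\Z^{2g}*\Z^{2g}$, not $\Z^{2g}$. Your own Mayer--Vietoris gives $b_1(S_g)=4g$ (Lemma~\ref{lem: betti1}), and in fact $\pi_1(S_g)\cong\pi_1(N)*_{\Z_{g-1}}\pi_1(N)$.

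The Hurewicz argument also has an independent flaw. The folded map $\mu|_N\cup\mu|_{\ov N}$ acts on $H_2(N;\R)\oplus H_2(\ov N;\R)$ by $(x,y)\mapsto\mu_*x+\mu_*y$. Its kernel is not the sum of the kernels on the two summands. Since $\mu_*\colon H_2(N;\R)\cong u^\perp\to H_2(T^{2g};\R)$ is an isomorphism (both have dimension $2g^2-g$), that kernel is $(2g^2-g)$-dimensional.

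This part can be repaired without any knowledge of $\pi_1(N)$:
\begin{enumerate}
\item Because $\pi_1(L)$ is finite, $\mu|_L$ is null-homotopic.
\item So $\mu|_N$ extends to a map $\nu_1\colon S_g\to T^{2g}$ that is null-homotopic on $\ov N$, as in Proposition~\ref{prop: tori embedding}. Define $\nu_2$ symmetrically.
\item Then $(\nu_1,\nu_2)$ is injective on $H_2(-;\R)$ and has aspherical target, so the real Hurewicz map vanishes.
\end{enumerate}
This is a genuinely different, and shorter, route than the paper's covering $\wh S_g$.

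Step~4 does not survive either, because it rests on $\pi_1(S_g)\cong\Z^{2g}$. The actual group is an amalgam over a finite group of infinite index in both factors. It therefore contains nonabelian free subgroups and is not amenable.

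You still get $b_2^{(2)}(\wt S_g)=\chi(S_g)+2b_1^{(2)}\ge\chi(S_g)>0$. For non-amenable $\pi_1$, however, this does not force $H_2(\wt S_g;\R)\neq 0$: for example, $M_2\times M_2$ is aspherical with $b_2^{(2)}=\chi=4$.

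What is missing is control of $\pi_1(N)$ beyond its abelianization. The paper obtains this from the central extension in Proposition~\ref{prop: central extension}. It then bounds $H_2(\wt N;\R)$ from below via Hochschild--Serre and the Borel spectral sequence (Proposition~\ref{prop: dim h_2(wt N)}). Finally it injects this into $\pi_2(S_g)$ by Mayer--Vietoris over the tree of copies of $\wt N$ glued along $3$-spheres (Proposition~\ref{prop: pi_2 injective}).

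Granting that extension, your $L^2$ count could also be completed. With $\pi_1(N)$ infinite and amenable, the amalgam gives $b_2^{(2)}(\pi_1 S_g)=0$ and $b_1^{(2)}(\pi_1 S_g)=1/(g-1)$. Then $\pi_2(S_g)=0$ would force $\chi(S_g)=-2/(g-1)<0$, a contradiction.
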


For $g$ even, $S_g$ cannot support almost complex structures (so $S_g$ is not symplectic for $g$ even), see Proposition~\ref{prop: no almost cplx} below. More generally, $S_g$ cannot support complex structures for any $g$ (so they are never K\"ahler), see Subsection~\ref{sec: cohomology Xg}. We also note that the genus restriction in Theorem~\ref{thm: main} cannot be dropped (\emph{cf}. Section~\ref{sec: exceptional}).

Taking products of $S_g$ with tori yields the following (see Subsection~\ref{sec: real Hurewicz}).

\begin{cor}\label{cor: each even dim}
   In each even dimension $\ge 4$, there exists an infinite family of totally $c$-symplectically aspherical near-symplectic manifolds that are not K\"ahler.
\end{cor}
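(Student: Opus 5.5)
We take $X_{g,k}:=S_g\times T^{2k}$ for $g\ge 3$ and $k\ge 0$, so that $\dim X_{g,k}=2k+4$. For fixed $k$, letting $g$ range over $g\ge 3$ gives the candidate infinite family in dimension $2n=2k+4$. By Theorem~\ref{thm: main}, $S_g$ carries a near-symplectic form $\omega$ and a $c$-symplectic class $[\omega']$. Let $\sigma$ be the standard symplectic form on $T^{2k}$.

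\emph{Step 1 ($c$-symplectic structure).} The form $\pr_1^*\omega'+\pr_2^*\sigma$ is closed. Its $n$-th power has Künneth component $\binom{n}{2}[\omega']^2\times[\sigma]^k\neq 0$, so it is $c$-symplectic.

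\emph{Step 2 (near-symplectic structure).} Let $Z\subset S_g$ be the zero set of $\omega$; it is $1$-dimensional and $\omega$ vanishes transversely along it. Set $\Omega=\pr_1^*\omega+\pr_2^*\sigma$.
\begin{itemize}
\item Off $Z\times T^{2k}$, $\Omega$ is a product of symplectic forms, hence symplectic.
\item On $Z\times T^{2k}$, which has codimension $3$, we have $\omega_p=0$. Hence $\Omega_p^{\,n-1}=\sigma^{k+1}=0$ and $\Omega_p^{\,n-2}=\sigma^k\neq0$.
\item Transversality of vanishing holds because it only involves the $S_g$-directions, where it is inherited from $\omega$.
\end{itemize}

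\emph{Step 3 (total asphericity and non-triviality).} Since $\pi_2(T^{2k})=0$, we have $\pi_2(X_{g,k})=\pi_2(S_g)$, and the Hurewicz map of $X_{g,k}$ factors through $h_2$ of $S_g$ followed by the inclusion $S_g\times\{pt\}\hookrightarrow X_{g,k}$. By Theorem~\ref{thm: main}, the real Hurewicz image of $X_{g,k}$ is therefore zero while $\pi_2(X_{g,k})\otimes\R\neq0$. Consequently every closed $2$-form on $X_{g,k}$ is aspherical, and in particular every $c$-symplectic one is.

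\emph{Step 4 (infinitely many distinct manifolds).} I would distinguish the $X_{g,k}$ by a homotopy invariant that grows with $g$. The first choice is $b_2(S_g)$ or the Euler characteristic $\chi(S_g)=2\chi(SP^2(M_g))-\chi(L)$, which is a quadratic function of $g$. Then $\chi$ (for $k=0$) or the Betti numbers (via Künneth, for $k>0$) separate the members of the family, using the (co)homology computed in Section~\ref{sec: proof section}.

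\emph{Step 5 (not Kähler).} This is the step I expect to be the main obstacle. The plan is to take the cohomological obstruction to complex/Kähler structures on $S_g$ from Subsection~\ref{sec: cohomology Xg} and check that it survives multiplication by $T^{2k}$.
\begin{itemize}
\item The first thing to try is a failure of the hard Lefschetz property for every class in $H^2(S_g;\R)$. Via Künneth one would show that no class $\pr_1^*\alpha+\pr_2^*\beta+(\text{mixed terms})$ on $X_{g,k}$ can satisfy hard Lefschetz. The natural restriction is to $S_g\times\{pt\}$ and to the degrees coming from $H^1(S_g)\otimes H^*(T^{2k})$.
\item If hard Lefschetz is awkward because of the mixed terms, a fallback is a parity or formality obstruction. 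An odd Betti number of $S_g$ would make $b_{\mathrm{odd}}(X_{g,k})$ odd in some degree. A nonvanishing Massey product on $S_g$ would persist on $S_g\times T^{2k}$, since products of formal spaces are formal and $S_g$ is a retract of $X_{g,k}$.
\end{itemize}
Either route rules out a Kähler structure on $X_{g,k}$, and completes Corollary~\ref{cor: each even dim}.
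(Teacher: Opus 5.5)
Steps 1--4 are sound and follow the paper's route: $S_g\times T^{2k}$, the product near-symplectic form, and the factorization of the Hurewicz map through $S_g\times\{pt\}$. There is one harmless slip: $\chi(S_g)=2\chi(N)-\chi(L)=2\chi(SP^2(M_g))-4=4g^2-10g+2$, not $2\chi(SP^2(M_g))-\chi(L)$. In any case $b_1=4g+2k$ already separates the family.

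The genuine gap is Step 5: none of the obstructions you propose is available. Subsection~\ref{sec: cohomology Xg} contains no cohomological obstruction; Proposition~\ref{prop: no complex} is proved through $\pi_1$. For odd $g$ the obvious numerical tests ($b_1=4g$ even, $b_2^+=2g^2-g$ odd, $(\chi+\sigma)/4\in\Z$) give nothing. Hard Lefschetz does \emph{not} fail on $S_g$:
\begin{itemize}
\item The Kähler class $\kappa=[\mu^*\omega]$ from Proposition~\ref{prop: tori embedding} is pulled back from a torus. Hence it vanishes on the spherical class $u$ and comes from $H^2(N,L;\R)$.
\item Since $L$ is a rational homology sphere, $H^k(S_g;\R)\cong H^k(N,L;\R)\oplus H^k(\ov N,L;\R)$ for $k=1,2,3$, compatibly with cup products. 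Moreover $H^k(N,L;\R)\cong H^k(SP^2(M_g);\R)$ for $k=1,3$.
\item So for $t\notin\{0,\pm1\}$ the class $\kappa\oplus t\kappa$ induces an isomorphism $H^1(S_g;\R)\to H^3(S_g;\R)$. Its square is $(1-t^2)\langle\kappa^2,[SP^2(M_g)]\rangle\neq 0$.
\item Adding $\pr_2^*[\sigma]$ then gives a Lefschetz class on $S_g\times T^{2k}$.
\end{itemize}
The parity fallback fails because $b_1(S_g)=b_3(S_g)=4g$, so every odd Betti number of $S_g\times T^{2k}$ is even. You also exhibit no nonvanishing Massey product. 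The triple products of degree-one classes in fact vanish: $\pi_1(S_g)$ is a central extension of $\Z^{2g}\ast\Z^{2g}$ by $\Z_m$, so its classifying space is rationally equivalent to the formal space $T^{2g}\vee T^{2g}$.

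The missing idea is the group-theoretic obstruction, which is what the paper uses and what survives multiplication by a torus.
\begin{itemize}
\item By \eqref{eq: amalgam}, $\pi_1(S_g)\cong\pi_1(N)\ast_{\Z_m}\pi_1(N)$ is a nontrivial amalgam over a finite group with infinite-index factors. By Stallings' theorem it therefore has infinitely many ends.
\item For $k\ge 1$ the group $\pi_1(S_g\times T^{2k})\cong\pi_1(S_g)\times\Z^{2k}$ is itself one-ended. So ``Kähler groups have at most one end'' is not enough.
\item What is needed is the Gromov and Arapura--Bressler--Ramachandran result invoked in the paper: a Kähler group cannot be an extension, with finitely generated kernel, of a group with infinitely many ends. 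Apply it to the projection onto $\pi_1(S_g)$ with kernel $\Z^{2k}$.
\item For $k=0$, Proposition~\ref{prop: no complex} gives the claim directly.
\end{itemize}
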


\subsection{$c$-symplectic structures on $S_g$}

While a near-symplectic form on a closed $4$-manifold induces a $c$-symplectic structure on it (\emph{cf.} Section~\ref{sec: preliminaries}), we can directly show explicit $c$-symplectic structures on $S_g$ for $g\ge 3$. The idea is to realize $S_g$ as smooth submanifolds of higher dimensional tori.

The following lemma will be crucial for computational purposes.

\begin{lemma}\label{lem: pd u square}
    For $g\ge 0$, the self-intersection of $\Sigma$ in $SP^2(M_g)$ is equal to $1-g$.
\end{lemma}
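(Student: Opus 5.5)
The self-intersection of $\Sigma$ is the homological square $u\cdot u$, which equals $\langle PD(u)^2,[SP^2(M_g)]\rangle$. So I will compute $PD(u)$ from the Poincaré duals recalled in Section~\ref{sec: SP} and then square it using Macdonald's relations.

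By~\eqref{eq: spherical homo}, $u=c-\sum_{i=1}^g a_i\cdot b_i$. The Poincaré dual of $c$ should be $x:=a_1^*b_1^*$-free generator class $c^*$ up to the right normalization, and I would double-check this by evaluating on $c$ and on the $a_i\cdot b_j$. Using $PD(a_i\cdot b_i)=c^*-a_i^*b_i^*$, and provisionally $PD(c)=c^*$ (to be verified against $g=0$, where $SP^2(M_0)=\C P^2$ and $c$ is the line with square $1$), I get
\[
PD(u)=c^*-\sum_{i=1}^g (c^*-a_i^*b_i^*)=(1-g)c^*+\sum_{i=1}^g a_i^*b_i^*.
\]
Write $\sigma_i=a_i^*b_i^*$.

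Next I will square this class. Macdonald's relation with $\ell=m=0$, $s=2$ gives nothing new. With $r=1,s=1$ it gives $(c^*-\sigma_k)c^*=0$, so $\sigma_k c^*=(c^*)^2$. With $r=2$ and $k_1\ne k_2$ it gives $(c^*-\sigma_i)(c^*-\sigma_j)=0$, hence $\sigma_i\sigma_j=(c^*)^2$ for $i\ne j$. Also $\sigma_i^2=0$, because $a_i^*$ squares to zero. Let $e=(c^*)^2$, which is the positive generator in the complex orientation, since $c^*$ is Poincaré dual to a copy $x+M_g$ of $M_g$, and two such copies meet in one point. Then
\[
PD(u)^2=(1-g)^2e+2(1-g)g\,e+g(g-1)e=(1-g)\big[(1-g)+2g-g\big]e=(1-g)e.
\]
This gives self-intersection $1-g$. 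As a sanity check, $g=0$ gives $1$ (the conic class $u$ in $\C P^2$ is the line), and $g=1$ gives $0$ (the fiber of the $\C P^1$-bundle).

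The main obstacle is the bookkeeping of signs and normalizations: whether $PD(c)=c^*$, and the sign convention in $PD(a_i\cdot b_i)$, where $a_i^*b_i^*$ versus $b_i^*a_i^*$ flips a sign. A sign error would change the answer to something like $1-g+2g$. I would pin these conventions down with the $g=1$ case, where $u$ must be a fiber class of square $0$ (and with $g=2$, where $u$ is the exceptional sphere of $T^4\#\ov{\C P^2}$ with square $-1$). Once both checks agree with the formula, the general computation above stands.
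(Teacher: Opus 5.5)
Your proposal is correct and follows essentially the paper's route: both take $PD(c)=c^*$ and $PD(a_i\cdot b_i)=c^*-a_i^*b_i^*$, square $PD(u)$, and reduce with Macdonald's relations, graded commutativity and $(c^*)^2=1$. The only difference is bookkeeping: you expand in $c^*$ and $\sigma_i=a_i^*b_i^*$ (so you first derive $\sigma_ic^*=\sigma_i\sigma_j=(c^*)^2$ for $i\ne j$ and $\sigma_i^2=0$), whereas the paper expands in $c^*$ and $c^*-\sigma_i$, where the relations annihilate the products directly; the garbled phrase about a ``$a_1^*b_1^*$-free generator'' should simply read $PD(c)=c^*$.
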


\begin{proof}
    Since $\Sigma$ represents a real spherical homology class $u$, we must prove that $(PD(u))^2=1-g$.  Recall from Subsection~\ref{sec: SP} that $u=c-\sum_{i=1}^ga_i\cdot b_i$, $PD(c)=c^*$, and $PD(a_i\cdot b_i)=c^*-a_i^*b_i^*$. So, we get
    \[
    (PD(u))^2=(c^*)^2-2\sum_{i=1}^gc^*(c^*-a_i^*b_i^*)+\left(\sum_{i=1}^g(c^*-a_i^*b_i^*)\right)^2.
    \]
    The first term above is $1$ and the second is $0$ by Macdonald's relations~\cite{Mac}. Similarly, 
    \[
    (c^*-a_i^*b_i^*)^2=-a_i^*b_i^*(c^*-a_i^*b_i^*)=-a_i^*b_i^*c^*=c^*(c^*-a_i^*b_i^*)-1=-1
    \]
    because $a_i^*b_j^*=-b_j^*a_i^*$ and $(a_i^*)^2=0=(b_j^*)^2$ for all $i,j$. Also, for $j\ne i$, we have $(c^*-a_i^*b_i^*)(c^*-a_j^*b_j^*)=-a_i^*b_i^*(c^*-a_j^*b_j^*)=0$ by Macdonald's relations. Hence, the third term above (the squared term) is $-g$. Therefore, $(PD(u))^2=1-g$.
\end{proof}

Consider the abelian variety $T^{2g+4}\cong \C^{g+2}/\Lambda$ (where $\Lambda\cong\Z^{2g+4}$ is a lattice), the standard K\"ahler form $\omega$ on $T^{2g+4}$ induced using the canonical $\Lambda$-invariant Euclidean K\"ahler form on $\C^{g+2}$, the closed surface $M_g$ of genus $g\ge 3$ equipped with a complex structure \emph{generic} in its moduli space, and the smooth projective variety $SP^2(M_g)$ that parametrizes effective divisors of degree $2$ on $M_g$. It follows from the proof of Theorem~4.1 in~\cite{DCDJ} that the composition
\[
\mu\colon SP^2(M_g)\xlongrightarrow{\mu_2}T^{2g}\xhookrightarrow{i}T^{2g+4}
\]
is a holomorphic embedding, where $\mu_2$ is the Abel--Jacobi map. Furthermore, we have the following. 

\begin{prop}\label{prop: tori embedding}
    For $g\ge 3$, there exists a smooth embedding $\nu\colon S_g\to T^{2g+4}$ such that the pullback form $\nu^*\omega$ on $S_g$ is $c$-symplectic.
\end{prop}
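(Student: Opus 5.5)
The plan is to build $\nu$ from two copies of the holomorphic embedding $\mu\colon SP^2(M_g)\to T^{2g+4}$, displaced into disjoint regions, and then to join them along the boundary of the tubular neighborhood of $\Sigma$. Write $T^{2g+4}=T^{2g}\times T^4$ and let $\mu_t=\mu+t$ for a translation $t$ in the $T^4$ factor. Then $\mu(SP^2(M_g))$ and $\mu_t(SP^2(M_g))$ are disjoint complex submanifolds. Both pull back $\omega$ to Kähler forms on $SP^2(M_g)$ inducing the complex orientation.

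The generalized connected sum $S_g=N\cup_\phi\ov N$ is then realized as follows. Restrict $\mu$ to $N$ and $\mu_t$ to $N$, and connect the two boundary copies of $L$ by a cylinder $L\times[0,1]$ embedded in $T^{2g+4}$. For example, take the track of the translation $s\mapsto \mu+st$ restricted to $L$, suitably bent so that it stays disjoint from $\mu(\mathrm{Int}\,N)$ and $\mu_t(\mathrm{Int}\,N)$.

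\begin{itemize}
\item Since $\dim T^{2g+4}=2g+4\ge 10>2\cdot 4$, Whitney's general position lets us perturb the cylinder rel a collar of its ends so that the whole map is an embedding.
\item Smoothing the corners yields a smooth embedding of $N\cup_{L}(L\times I)\cup_L N$.
\item As an oriented manifold, where the second copy of $N$ inherits the opposite orientation from the gluing, this is exactly $S_g$.
\end{itemize}

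It remains to check that $[\nu^*\omega]^2\neq 0$. Instead of computing on the cylinder, I would compute cohomologically. First, $[\nu^*\omega]$ restricts on each copy of $N$ to $\mu^*[\omega]|_N$; translations act trivially on cohomology, so the two restrictions agree. Second, evaluate $[\nu^*\omega]^2$ on $[S_g]$ using the decomposition
\[
\int_{S_g}\nu^*\omega^2=\int_N\mu^*\omega^2-\int_N\mu_t^*\omega^2+\int_{L\times I}\nu^*\omega^2 .
\]
The first two terms cancel exactly, because $\mu_t^*\omega=\mu^*\omega$ as forms (translation invariance of $\omega$). So the total equals the cylinder contribution, and we need that to be nonzero.

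This is the main obstacle, and it is where Lemma~\ref{lem: pd u square} should enter. To make the cylinder term computable, rather than taking $\mu_t$ I would take the second copy to be $\mu$ composed with an affine automorphism $A$ of $T^{2g+4}$. The automorphism $A$ should reverse orientation on the image of $H_2(SP^2(M_g))$ in a controlled way, for instance by acting as complex conjugation on some coordinates, so that the pulled-back class on the $\ov N$ side is $\mu^*[\omega]$ on $H^2(N)$ but the orientation flip makes the two contributions add rather than cancel. That would give $[\nu^*\omega]^2[S_g]=2\int_N\mu^*\omega^2\ne0$ up to the cylinder term.

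The cylinder term is controlled by Mayer--Vietoris. Write $\nu^*[\omega]$ as the class glued from $\alpha|_N$ on both sides, where $\alpha=\mu^*[\omega]$. Stokes' theorem then shows that $\int_N\alpha^2$ equals $\alpha^2[SP^2]$ minus a correction depending on $\alpha(u)$ and the self-intersection $u\cdot u=1-g\neq 0$. The condition $g\ge3$ guarantees $1-g\neq0$, so $\alpha$ can be split on $N$ without ambiguity along $L$; that is, $H^1(L)$ does not obstruct.

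I would therefore first try an explicit cohomology computation in $H^*(S_g;\R)$. The goal is to show $\nu^*[\omega]$ is the Mayer--Vietoris glue of $\alpha|_N$ and $\alpha|_{\ov N}$, and that its square evaluates to $2\bigl(\alpha^2-\alpha(u)^2/(1-g)\bigr)$. I expect this to be nonzero by the Hodge index theorem, since $\alpha$ is Kähler.
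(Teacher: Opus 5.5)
\textbf{There is a genuine gap: both versions of your construction give $[\nu^*\omega]^2=0$.}

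\emph{Why the symmetric gluing fails.} Since $g\ge 3$, $L$ is a lens space, so $H^1(L;\R)=H^2(L;\R)=0$. Mayer--Vietoris then makes restriction an isomorphism $H^2(S_g;\R)\cong H^2(N;\R)\oplus H^2(\ov N;\R)$. Because $S_g=N\cup_{\phi}\ov N$ with $\phi$ the identity, $S_g$ is a double. It therefore carries an orientation-reversing involution $\tau$ that swaps the two halves.
\begin{itemize}
\item If a class $\beta$ has equal restrictions to the two halves, then $\tau^*\beta=\beta$, so $\langle\beta^2,[S_g]\rangle=-\langle\beta^2,[S_g]\rangle=0$.
\item With $\mu_t=\mu+t$, both restrictions equal $\mu^*[\omega]|_N$, so $[\nu^*\omega]^2=0$ exactly. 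The cylinder term is not an obstacle to be controlled; it is forced to make the total vanish.
\item Your affine-automorphism variant, as you state it (``the pulled-back class on the $\ov N$ side is $\mu^*[\omega]$ on $H^2(N)$''), dies for the same reason. The minus sign comes from the orientation of $\ov N$, i.e.\ of the source. A self-map $A$ of the target enters only through the class $\mu^*A^*[\omega]$.
\end{itemize}

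\emph{The evaluation formula is wrong.} For restrictions $\alpha_1|_N$ and $\alpha_2|_N$, the glued class has square $\tilde\alpha_1^2[N,L]-\tilde\alpha_2^2[N,L]$, where $\tilde\alpha_i$ is the unique lift to $H^2(N,L;\R)\cong H^2(N;\R)$. It is not $2\bigl(\alpha^2-\alpha(u)^2/(1-g)\bigr)$.

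\emph{Two further points.}
\begin{itemize}
\item For any $\alpha$ pulled back from a torus, $\alpha(u)=0$, because $u$ is spherical and $T^{2g+4}$ is aspherical. Hence $\tilde\alpha^2[N,L]=\alpha^2[SP^2(M_g)]$, and neither the correction term nor the Hodge index theorem plays any role.
\item Coordinatewise complex conjugation need not preserve $\Lambda$, so it is not in general a self-map of $T^{2g+4}$.
\end{itemize}

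\textbf{The missing idea is to break the symmetry between the two halves.} This is what the paper does.
\begin{itemize}
\item $\mu|_L$ is null-homotopic, since $\pi_1(L)$ is finite and $T^{2g+4}$ is aspherical. So $\mu|_N$ extends to a continuous map $\nu'\colon S_g\to T^{2g+4}$ that is constant on $\ov N$.
\item Since $2g+4\ge 9$, Whitney's theorem approximates $\nu'$ by a homotopic smooth embedding $\nu$. No disjoint copies or tubes are needed.
\item $[\nu^*\omega]$ is pulled back from $x\oplus 0\in H^2(S_g/L)$, using $S_g/L\cong(SP^2(M_g)/\Sigma)\vee(SP^2(M_g)/\Sigma)$. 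The class $x^2\oplus 0$ is not in the (anti)diagonal image of $H^3(L)$, so $[\nu^*\omega]^2\ne 0$.
\item In your language, $[\nu^*\omega]^2[S_g]=[\mu^*\omega]^2[SP^2(M_g)]-0>0$.
\end{itemize}
Your tube construction could be salvaged only by choosing a lattice automorphism $A$ with $(\mu^*A^*[\omega])^2\ne(\mu^*[\omega])^2$, and you would have to verify that explicitly.
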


\begin{proof}
Suppose $\mu_{|N}\colon N\to T^{2g+4}$ is the restriction of $\mu\colon SP^2(M_g)\to T^{2g+4}$ to $N$. Note that $S_g=N \cup_{\phi}\ov N$ and 
\[
\pa N\cong L_m=:L
\]
is a closed smooth $3$-dimensional lens space for $m=g-1\ge 2$ (\emph{cf}. Lemma~\ref{lem: pd u square}), which is fibered over $\Sigma$ with fiber a circle $\gamma$ that represents a generator of the finite cyclic group $\pi_1(L)\cong\Z_m$, see~\eqref{eq: bundle} below. Due to fundamental group reasons, the restriction $\mu_{|L}\colon L\to Y$ of $\mu_{|N}$ is null-homotopic, so $\mu_{|N}$ extends to a map $\nu'\colon S_g\to T^{2g+4}$, which can be approximated by a \emph{homotopic} smooth embedding $\nu\colon S_g\to T^{2g+4}$ in view of Whitney's embedding theorems. Now, $\nu$ factors through $S_g/L$, so the cohomology class $[\nu^*\omega]$ comes from a class $a\in H^2(S_g/L)$.
%, i.e., $[\nu^*\omega]=q^*(a)$, where $q\colon S_g\to S_g/L$ is the quotient map. 
We note that 
\[
S_g/L\cong(SP^2(M_g)/\Sigma)\vee(SP^2(M_g)/\Sigma),
\]
so
by our construction, 
\[
a=x\oplus 0\in H^2(SP^2(M_g)/\Sigma)\oplus H^2(SP^2(M_g)/\Sigma)\cong H^2(S_g/L),
\]
where $x$ maps to $[\mu^*\omega]$. In particular, $x^2\ne 0$, which implies $a^2\ne 0$.
In the exact sequence
\[
\cdots\to\Z\cong H^3(L)\xlongrightarrow{j} H^4(S_g/L)\cong \Z\oplus\Z\longrightarrow H^4(S_g)\cong\Z\to 0,
\]
the class $a^2=x^2\oplus 0$ is not in the image of $j$ since $j(y)=\pm j_N(y)\oplus j_N(y)$, where $j_N$ is the homomorphism induced in cohomology by the inclusion $L\hookrightarrow N$. By exactness, it follows that $[\nu^*\omega]^2\ne 0$.
\end{proof}

\subsection{Viewing $S_g$ as generalized connected sums}
Up to homeomorphism, the closed $4$-manifold $S_g$ is the \emph{generalized connected sum} of $M_1:=SP^2(M_g)$ and $M_2:=\ov{SP^2(M_g)}$ along $\Sigma$. To see this, we begin by letting $j_i\colon \Sigma\to M_i$ denote the smooth embedding for $i\in\{1,2\}$, and we let $\eta_i$ be the normal bundle of $j_i$ that is canonically identified with a tubular neighborhood $W_i\subset M_i$ of $j_i(\Sigma)$. The Euler class of $\eta_i$, being equal to the self-intersection number of $\Sigma$ in $M_i$, is $(-1)^i(g-1)$ for $i\in\{1,2\}$ by Lemma~\ref{lem: pd u square}. The Euler classes have opposite signs for $g\ne 1$, so
%$W_1$ and $W_2$ are diffeomorphic and 
there exists a fiberwise orientation-reversing diffeomorphism $\eta_1\to \eta_2$ in this case, and thus an orientation-preserving diffeomorphism 
\[
\psi\colon W_1\setminus j_1(\Sigma)\to W_2\setminus j_2(\Sigma),
\]
see, for example,~\cite[Page~534]{Go1}. Then $S_g$ is obtained by gluing $W_1\setminus j_1(\Sigma)$ to $W_2\setminus j_2(\Sigma)$ using $\psi$ in the disjoint union $(M_1\setminus j_1(\Sigma))\sqcup (M_2\setminus j_2(\Sigma))$.  

In the symplectic setting, Gompf's result~\cite{Go1} says, in particular, that if $M_i$ and $j_i$ are symplectic for $i\in\{1,2\}$, then $M_1\ \#_{\Sigma}\ M_2$ is symplectic as well. However, that result does not apply here since the manifold $M_2=\ov{SP^2(M_g)}$ is not symplectic for any $g\ne 1$, as we explain below in Proposition~\ref{prop: opposite orientation}. First, we need the following elementary lemma.

\begin{lemma}\label{lem: euler and sig}
The Euler characteristic and signature of $SP^2(M_g)$, respectively, are 
\[
\chi(SP^2(M_g))=2g^2-5g+3\quad \text{and}\quad \sigma(SP^2(M_g))=1-g.
\]   
In particular, $b_2^+(SP^2(M_g))=g^2-g+1$ and $b_2^-(SP^2(M_g))=g^2$.
\end{lemma}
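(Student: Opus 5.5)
We compute the Betti numbers of $SP^2(M_g)$ from Macdonald's description recalled in Subsection~\ref{sec: SP}, and the signature from the intersection form on $H^2$.

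\textbf{Betti numbers.} Via the standard identification $H^*(SP^2(X);\R)\cong (H^*(X;\R)^{\otimes 2})^{\Z_2}$ (transfer for the finite quotient $M_g\times M_g\to SP^2(M_g)$, with the Koszul sign in the swap), we get
\[
b_0=1,\quad b_1=2g,\quad b_2=\binom{2g}{2}+1,\quad b_3=2g,\quad b_4=1 .
\]
This agrees with the generators listed in Subsection~\ref{sec: SP}: $c$ together with the products $a_i a_j$, $b_ib_j$ ($i<j$) and $a_ib_j$ give $1+2\binom g2+g^2=2g^2-g+1$ classes in degree $2$. Hence
\[
\chi=2-4g+2g^2-g+1=2g^2-5g+3.
\]
As a sanity check, $\chi(SP^2(X))=\binom{\chi(X)+1}{2}$ with $\chi(M_g)=2-2g$ gives $(2-2g)(3-2g)/2$, which is the same number.

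\textbf{Signature.} Compute the cup product on the basis of $H^2$ given by the Poincaré duals, using Macdonald's relations as in the proof of Lemma~\ref{lem: pd u square}.
\begin{itemize}
\item From that proof, the classes $c^*$ and $e_i:=c^*-a_i^*b_i^*$ satisfy $(c^*)^2=1$, $c^*e_i=0$, $e_i^2=-1$ and $e_ie_j=0$ for $i\neq j$. So this $(g+1)$-dimensional block has signature $1-g$.
\item The remaining classes are $a_i^*a_j^*$ and $b_i^*b_j^*$ for $i<j$, and $a_i^*b_j^*$ for $i\neq j$. Each is orthogonal to $c^*$ and to every $e_k$, since the relevant degree-$4$ products involve too many distinct indices and vanish by Macdonald's relations.
\item Among themselves these classes pair off, with only the following pairings nonzero: $a_i^*a_j^*$ with $b_i^*b_j^*$, and $a_i^*b_j^*$ with $a_j^*b_i^*$ (values $\pm1$ via $a_i^*b_i^*a_j^*b_j^*=\pm1$ for $i\neq j$). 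Each such pair spans a hyperbolic plane and contributes signature $0$. The self-products in this block vanish because some generator is repeated.
\end{itemize}
So $\sigma=1-g$. One caveat: $a_i^*b_j^*$ and $a_j^*b_i^*$ are distinct classes, but they are swapped under $i\leftrightarrow j$, so the pairing is well defined. The count is $2\binom g2$ hyperbolic pairs from the $aa/bb$ classes plus $\binom g2$ pairs from the $i\neq j$ mixed classes, for $2g(g-1)$ dimensions in total. This matches $b_2-(g+1)=2g^2-2g$.

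\textbf{Conclusion.} Solving $b_2^++b_2^-=2g^2-g+1$ and $b_2^+-b_2^-=1-g$ gives $b_2^+=g^2-g+1$ and $b_2^-=g^2$.

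The main obstacle is getting the signs in the mixed pairings right. They do not affect the signature, however, since a hyperbolic block has signature $0$ whatever the sign of its off-diagonal entry. Alternatively, the signature follows from the Hirzebruch formula together with the known $c_1^2$ of $SP^2(M_g)$, but the direct computation above is more self-contained.
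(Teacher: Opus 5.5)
Your proof is correct and follows essentially the paper's route (Betti numbers from Macdonald's basis, then the signature from the cup-product form via Macdonald's relations). It is in fact more complete than the paper's sketch: you check that the $2g^2-2g$ classes outside the span of $c^*,e_1,\dots,e_g$ split off as an orthogonal sum of hyperbolic planes, whereas the paper relies only on the self-intersection numbers. One harmless slip: the $a_i^*a_j^*$ and $b_i^*b_j^*$ classes give $\binom g2$ hyperbolic pairs (there are $2\binom g2$ such classes), not $2\binom g2$ pairs, although your stated total of $2g(g-1)$ dimensions is correct.
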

\begin{proof}
Poincar\'e duality and the isomorphism $\pi_1(SP^2(M_g))\cong\Z^{2g}$ implies that $b_0(SP^2(M_g))=1=b_4(SP^2(M_g))$ and $b_1(SP^2(M_g))=2g=b_3(SP^2(M_g))$. By the description of the basis of $H_2(SP^2(M_g);\R)$ from Subsection~\ref{sec: SP}, we also have 
\[
b_2(SP^2(M_g))=2g^2-g+1,
\]
which implies that $\chi(SP^2(M_g))=2g^2-5g+3$.

For the signature, we proceed as follows. Since $(PD(c))^2=(c^*)^2=1$, the self-intersection of $c$ is $1$. It follows from the expressions for the Poincar\'e duals from~\cite{Ka2} that the self-intersection of $a_i\cdot b_i$ is $-1$ for $1\le i \le g$, and that of any other Pontryagin product in the basis of $H_2(SP^2(M_g);\R)$ is $0$. So, $g$ entries in the intersection matrix are $-1$, one is $1$, and others are $0$. Hence, $\sigma(SP^2(M_g))=1-g$.

The assertions for $b_2^+(SP^2(M_g))$ and $b_2^-(SP^2(M_g))$ are now easily verified.
\end{proof}

\begin{prop}\label{prop: opposite orientation}
Consider the closed oriented symplectic $4$-manifold $SP^2(M_g)$. For $g\ne 1$, the smooth $4$-manifold $\ov{SP^2(M_g)}$ cannot support a symplectic structure compatible with the opposite orientation. In fact, for $g$ even, $\ov{SP^2(M_g)}$ cannot support an almost complex structure compatible with the opposite orientation.
\end{prop}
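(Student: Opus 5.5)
The plan is to handle the two assertions separately: a characteristic-number obstruction for the almost complex statement (even $g$), and a Seiberg--Witten obstruction for the symplectic statement in the remaining cases (odd $g\ge 3$). All the numerical input comes from Lemma~\ref{lem: euler and sig} and Lemma~\ref{lem: pd u square}.

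\emph{Step 1: even $g$.} Recall the classical necessary condition (Wu, Hirzebruch--Hopf) for a closed oriented $4$-manifold $X$ to admit an almost complex structure compatible with its orientation. There must be an integral lift $c$ of $w_2(X)$ with $c^2=2\chi(X)+3\sigma(X)$. By Wu's formula $c^2\equiv\sigma(X)\pmod 8$, so $\chi(X)+\sigma(X)\equiv 0\pmod 4$; equivalently, $1-b_1(X)+b_2^+(X)$ is even. Reversing orientation leaves $\chi$ unchanged, negates $\sigma$, and swaps $b_2^\pm$. Hence Lemma~\ref{lem: euler and sig} gives
\[
\chi\big(\ov{SP^2(M_g)}\big)+\sigma\big(\ov{SP^2(M_g)}\big)=2g^2-5g+3+(g-1)=2(g-1)^2.
\]
This is divisible by $4$ exactly when $g$ is odd; equivalently, $1-2g+g^2=(g-1)^2$ is even only for odd $g$. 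So for even $g$ there is no almost complex structure, and a fortiori no symplectic structure, compatible with the opposite orientation. This also covers $g=0$, where $\ov{SP^2(M_0)}=\ov{\C P^2}$, and $g=2$.

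\emph{Step 2: odd $g\ge 3$.} Here almost complex structures are not ruled out, so I would use Seiberg--Witten theory. Since $b_2^+\big(\ov{SP^2(M_g)}\big)=b_2^-(SP^2(M_g))=g^2>1$, Taubes' theorem applies: if $\ov{SP^2(M_g)}$ carried a symplectic form compatible with its orientation, its Seiberg--Witten invariant for the canonical class would be $\pm1$. On the other hand, the sphere $\Sigma$ represents the class $u$. This class is non-torsion, since by Lemma~\ref{lem: pd u square} its self-intersection in $SP^2(M_g)$ is $1-g\neq 0$. After reversing orientation its self-intersection becomes $g-1\ge 2>0$.

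\emph{Step 3: conclusion.} I would then invoke the vanishing theorem of Fintushel--Stern (equivalently, the adjunction inequality of Kronheimer--Mrowka and Morgan--Szab\'o--Taubes). It says that a closed oriented $4$-manifold with $b_2^+>1$ containing a smoothly embedded sphere of non-negative self-intersection representing a non-torsion class has all Seiberg--Witten invariants equal to zero. This contradicts Taubes' theorem, so $\ov{SP^2(M_g)}$ is not symplectic for odd $g\ge3$. Together with Step~1, this gives the statement for all $g\neq1$.

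The main point to check carefully is Step~2. One must be sure that the hypotheses of the vanishing theorem are met: $\Sigma$ is genuinely smoothly embedded (as in the construction of $S_g$), its class is of infinite order, and $b_2^+>1$ holds for the reversed orientation. All three follow from Lemma~\ref{lem: pd u square} and Lemma~\ref{lem: euler and sig}. The parity computation in Step~1 is routine.
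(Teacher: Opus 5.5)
Your proposal is correct, and for odd $g\ge 3$ it takes a genuinely different route from the paper.

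Step~1 is exactly the paper's argument: integrality of the Todd genus $(\chi+\sigma)/4$, with $\chi+\sigma=2(g-1)^2$ for $\ov{SP^2(M_g)}$.

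For odd $g\ge 3$ the paper argues as follows. $SP^2(M_g)$ is a complex surface of general type in this range. Taubes' non-vanishing result for a putative symplectic form on $\ov{SP^2(M_g)}$ is combined with Kotschick's theorem, which in this situation forces $\sigma(SP^2(M_g))\ge 0$. This contradicts $\sigma(SP^2(M_g))=1-g<0$. You instead play Taubes' theorem against the Fintushel--Stern vanishing theorem, using the sphere $\Sigma$, whose self-intersection becomes $g-1>0$ once the orientation is reversed.

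Your route has several advantages:
\begin{itemize}
\item It stays inside the paper's own toolkit: Lemma~\ref{lem: pd u square} and the sphere already used to build $S_g$.
\item It needs neither the general-type property nor Kotschick's theorem.
\item It works uniformly for every $g\ge 2$.
\item It proves more: every Seiberg--Witten invariant of $\ov{SP^2(M_g)}$ vanishes.
\end{itemize}

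Its cost is that it depends on a \emph{smoothly embedded} sphere representing $u$. The paper simply posits this in the construction. If you want it justified, take a hyperelliptic complex structure on $M_g$. There the $g^1_2$ gives a smooth rational curve in $SP^2(M_g)$ of self-intersection $1-g$, hence representing $\pm u$. The diffeomorphism type of $SP^2(M_g)$ does not depend on the chosen complex structure, by Ehresmann's theorem applied over Teichm\"uller space.

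The paper's argument, by contrast, uses only numerical invariants plus the complex-analytic classification, with no embedded surface at all.
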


\begin{proof}
We first look at the case of even $g\ge 0$. We have from Lemma~\ref{lem: euler and sig} that $\sigma(\ov{SP^2(M_g)})=-\sigma(SP^2(M_g))=g-1$ and $\chi(\ov{SP^2(M_g)})=2g^2-5g+3$. Clearly, $\chi(\ov{SP^2(M_g)})+\sigma(\ov{SP^2(M_g)})=2g^2-4g+2$, which is not divisible by $4$ for $g$ even. However, it is well known due to Hirzebruch and Milnor (see~\cite{BH,Milnor}) that the Todd genus 
\begin{equation}\label{eq: todd}
\frac{\chi(Y)+\sigma(Y)}{4}  
\end{equation}
of a closed $4$-manifold $Y$ is an integer if $Y$ admits an almost complex structure.

This technique does not give obstructions to symplectic structure in the case of odd $g$, so we proceed as follows for \emph{each} $g\ge 3$. The compact complex surface $SP^2(M_g)$ is of general type in this range (see, for instance,~\cite[Page~39]{Abr}). Suppose $\ov{SP^2(M_g)}$ admits an orientation-compatible symplectic structure. Because 
\[
b_2^+(\ov{SP^2(M_g)})=b_2^-(SP^2(M_g))=g^2>1
\]
in view of Lemma~\ref{lem: euler and sig}, the Seiberg--Witten invariant of the canonical class of $\ov{SP^2(M_g)}$ is equal to $\pm 1$ due to~\cite{Tau}. Then it follows from~\cite[Theorem~1]{Kot} that $\sigma(SP^2(M_g))\ge 0$. But this contradicts Lemma~\ref{lem: euler and sig}. 
\end{proof}

\begin{remark}\label{rem: dragichi result}
    The above proof does not work in the case $g=1$. In this case, the Abel--Jacobi map $\mu_2\colon SP^2(M_1)\to T^{2}$ is diffeomorphic to a smooth locally trivial $S^2$-bundle, where $[S^2]\ne 0$ in $H_2(SP^2(M_1);\R)$. As explained in~\cite[Page~85]{Dra}, Thurston's well-known construction~\cite{Thu} of symplectic forms on such $4$-manifolds produces respective orientation-compatible symplectic structures on both $SP^2(M_1)$ and $\ov{SP^2(M_1)}$.
\end{remark}

The next proposition justifies the study of $c$-symplectic and near-symplectic structures on $S_g$.

\begin{prop}\label{prop: no almost cplx}
    For $g\ge 0$ even, the closed $4$-manifold $S_g$ cannot support an almost complex structure, and hence a symplectic structure.
\end{prop}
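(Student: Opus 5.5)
We use the same obstruction as in Proposition~\ref{prop: opposite orientation}: if a closed $4$-manifold $Y$ admits an almost complex structure compatible with its orientation, then the Todd genus \eqref{eq: todd}, $(\chi(Y)+\sigma(Y))/4$, is an integer. So it suffices to compute $\chi(S_g)$ and $\sigma(S_g)$ and show that $\chi(S_g)+\sigma(S_g)\not\equiv 0 \pmod 4$ when $g$ is even. A symplectic structure induces a compatible almost complex structure, so it is then excluded as well.

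\textbf{Euler characteristic.} We have $S_g=N\cup_L\ov N$, and $L$ is a closed $3$-manifold, so $\chi(L)=0$. Hence $\chi(S_g)=2\chi(N)$. Since $SP^2(M_g)=N\cup_L W$ and $W\simeq\Sigma\cong S^2$, we get $\chi(N)=\chi(SP^2(M_g))-2$. By Lemma~\ref{lem: euler and sig},
\[
\chi(S_g)=2(2g^2-5g+3)-4=4g^2-10g+2 .
\]

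\textbf{Signature.} Use Novikov additivity for the decomposition along the closed $3$-manifold $L$: $\sigma(S_g)=\sigma(N)+\sigma(\ov N)=\sigma(N)-\sigma(N)=0$.

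\textbf{Divisibility.} Therefore $\chi(S_g)+\sigma(S_g)=4g^2-10g+2\equiv 2-2g \pmod 4$. For $g$ even this is $\equiv 2\pmod 4$, so it is not divisible by $4$, and the Hirzebruch--Milnor criterion rules out an almost complex structure compatible with the orientation. The other orientation of $S_g$ is handled the same way: $\ov{S_g}\cong \ov N\cup N\cong S_g$, and in any case $\sigma$ only changes sign, which here is $0$. So no almost complex structure exists for either orientation.

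\textbf{Main check.} The only step that needs care is the Euler characteristic bookkeeping, since the boundary $L$ and the tubular neighborhood $W$ must be accounted for correctly. The Mayer--Vietoris relation $\chi(A\cup B)=\chi(A)+\chi(B)-\chi(A\cap B)$, with $\chi(L)=0$ and $\chi(W)=2$, makes this routine. For $g=0$ we have $m=g-1=-1$, so $L$ is still $S^3$ (the unit normal bundle of a sphere with self-intersection $1$), and the formula applies unchanged.
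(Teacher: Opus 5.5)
Your proof is correct and follows essentially the paper's argument. You compute $\chi(S_g)=4g^2-10g+2$ and $\sigma(S_g)=0$, then note that $\chi+\sigma\equiv 2 \pmod 4$ for even $g$, contradicting integrality of the Todd genus $(\chi+\sigma)/4$; the differences are cosmetic (you get $\chi$ by inclusion--exclusion through $N$ and $\sigma$ by Novikov additivity, whereas the paper uses the generalized connected sum formula and the oriented cobordism $S_g\sim SP^2(M_g)\sqcup\ov{SP^2(M_g)}$), and your remark that $\sigma(S_g)=0$ also handles the reversed orientation is a small bonus.
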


\begin{proof}
First, note that it follows from the discussion so far that, up to homeomorphism, $S_g$ is oriented-cobordant to the disjoint union $SP^2(M_g)\sqcup \ov{SP^2(M_g)}$ (see~\cite[Page~535]{Go1}). Therefore, in particular, we have
\begin{equation}\label{eq: euler xg}
\chi(S_g)=\chi(SP^2(M_g))+\chi(\ov{SP^2(M_g)})-2\chi(\Sigma)=4g^2-10g+2
\end{equation}
in view of Lemma~\ref{lem: euler and sig}. Similarly, the signature of $S_g$ is given by
\begin{equation}\label{eq: sign xg}
\sigma(S_g)=\sigma(SP^2(M_g))+\sigma(\ov{SP^2(M_g)})=\sigma(SP^2(M_g))-\sigma(SP^2(M_g))=0.    
\end{equation}
Then, as before, we can use the Hirzebruch--Milnor obstruction to almost complex structures on closed $4$-manifolds~\cite{BH,Milnor}, that the Todd genus~\eqref{eq: todd} be an integer, to conclude the proof. Indeed, $\chi(S_g)+\sigma(S_g)=4g^2-10g+2$ is \emph{not} divisible by $4$ for any even $g$.
\end{proof}

\section{Homotopy and homology of the sphere complement}\label{sec: homotopy of N} 
In this section, we present several important  topological computations for the compact $4$-manifold $N:=SP^2(M_g)\setminus \text{Int}(W)$, which is the $\Sigma$-sphere complement in $SP^2(M_g)$ (\emph{cf.} Section~\ref{subsec: construction}), and some of its coverings that will be very useful later.

\subsection{The fundamental group of $N$}
Recall from Lemma~\ref{lem: pd u square} that the self-intersection of the smooth $2$-sphere $\Sigma$ that represents the spherical homology class $u$ in $M:=SP^2(M_g)$ is $\langle PD(u),u\rangle=1-g$ for each $g\ge 0$. Let $L=\partial N$.

%Recall that for $g\ge 3$, the boundary of the compact $4$-manifold $N=M\setminus\text{Int}(W)$ is a lens space fibered over $\Sigma$ with the fiber a circle $\gamma$ that represents a generator of $\pi_1(L)\cong\Z_m$, where $m=g-1\ge 2$.

\begin{prop}\label{prop: injective}
For $g\ge 0$, the map $\pi_1(L)\to\pi_1(N)$ induced by the inclusion $L\hookrightarrow N$ is injective.
\end{prop}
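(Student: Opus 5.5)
The plan is to detect the order of the fiber circle $\gamma$ of $L\to\Sigma$ inside $\pi_1(N)$ by passing to a suitable covering of $N$ and computing its first homology. Recall that $L$ is the circle bundle over $\Sigma\cong S^2$ with Euler number $1-g$ (Lemma~\ref{lem: pd u square}). Hence $\pi_1(L)$ is cyclic, generated by $\gamma$, of order $m=|g-1|$, where $m=0$ means infinite cyclic. It therefore suffices to show that the image of $\gamma$ in $\pi_1(N)$ has order exactly $m$.

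A first attempt is the exact sequence of the pair $(M,N)$ with excision $H_2(M,N)\cong H_2(W,L)\cong\Z$, generated by a fiber disk with boundary $\gamma$. This fails: since $\langle c^*,u\rangle=\langle PD(c)\cup PD(u),[M]\rangle=(c^*)^2-\sum_i c^*(c^*-a_i^*b_i^*)=1$, the class $\gamma$ dies in $H_1(N)$. So $H_1(N)$ cannot see $\gamma$, and the argument must go to a cover where only spherical classes can be intersected with $\Sigma$.

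\textbf{Step 1: the cover.} Let $p\colon\wt M\to M$ be the universal cover, so $\pi_1(M)\cong\Z^{2g}$ is the deck group. Since $\Sigma$ and $W$ are simply connected, $p^{-1}(W)$ is a disjoint union of copies $hW$, $h\in\Z^{2g}$, each a disk bundle over a lift $h\wt\Sigma$. Then $N':=p^{-1}(N)=\wt M\setminus\bigcup_h \mathrm{Int}(hW)$ is the covering of $N$ associated to the kernel of $\pi_1(N)\to\pi_1(M)$. The loop $\gamma$ bounds a fiber disk in $W$, so it is null-homotopic in $M$. Hence $\gamma$ lifts to a loop $\gamma_1\subset L_1=\partial W_1$, and its class lies in $\pi_1(N')\subset\pi_1(N)$.

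\textbf{Step 2: homology of $N'$.} I would use the exact sequence of the pair $(\wt M,N')$ in ordinary (compactly supported cycle) homology, together with excision and the Thom isomorphism for each disk bundle $hW$:
\[
H_2(\wt M)\longrightarrow \bigoplus_h H_2(hW,hL)\cong\bigoplus_h\Z\,\gamma_h\longrightarrow H_1(N')\longrightarrow H_1(\wt M)=0 .
\]
The first map sends a compact cycle $z$ to $(z\cdot h\wt\Sigma)_h$, the vector of its intersection numbers with the lifted spheres. Consider the homomorphism $\bigoplus_h\Z\to\Z/m$ given by summing the coordinates. Its composite with the first map sends $z$ to $\sum_h z\cdot h\wt\Sigma=p_*z\cdot u$ mod $m$. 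Every class in $H_2(\wt M)\cong\pi_2(M)$ maps to a spherical class of $M$, so $p_*z$ is a real multiple $ku$ of $u$. Moreover $u$ is primitive, since $c\cdot u=1$, and $p_*z$ is integral, so $k\in\Z$. Therefore $p_*z\cdot u=k(1-g)\equiv 0 \pmod m$.

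\textbf{Step 3: conclusion.} By Step 2 the sum map descends to $H_1(N')\to\Z/m$ (to $\Z$ when $g=1$) and sends $\gamma_1\mapsto 1$. So $\gamma_1$ has order at least $m$ in $H_1(N')$, the abelianization of $\pi_1(N')\subset\pi_1(N)$. Hence $\gamma$ has order exactly $m$ in $\pi_1(N)$, and $\pi_1(L)\to\pi_1(N)$ is injective. The cases $g=0,2$ are trivial because then $\pi_1(L)=1$.

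The main obstacle is Step 2. It requires justifying excision and the Thom isomorphism for the infinitely many neighborhoods $hW$ in the noncompact manifold $\wt M$, and verifying that the connecting map is given by intersection numbers with the lifts $h\wt\Sigma$. A compact cycle meets only finitely many $hW$, which should make this work. The other essential input is the fact, recalled in Subsection~\ref{sec: SP}, that real spherical classes are multiples of $u$.
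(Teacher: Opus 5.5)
Your argument is correct, and it takes a genuinely different route from the paper. The paper's proof is a one-line contradiction: if $\pi_1(L)\to\pi_1(N)$ had a kernel, then, because $\gamma$ is the fiber of the circle bundle \eqref{eq: bundle}, the self-intersection of $\Sigma$ would have to be smaller than $1-g$, contradicting Lemma~\ref{lem: pd u square}. The mechanism linking the order of $\gamma$ in $\pi_1(N)$ to the Euler number is asserted there, not derived.

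Your proof supplies that mechanism. You observe, correctly via $c\cdot u=1$, that $\gamma$ is already null-homologous in $N$, so no homological argument inside $N$ can work and one must pass to $\wh N=p^{-1}(N)$. There $H_1(\wh N)$ is the cokernel of $H_2(\wt M)\to\bigoplus_h\Z\gamma_h$. The only relations come from spherical classes of $M$, which are integer multiples of $u$, so their total intersection with $\Sigma$ is divisible by $g-1$; this is exactly where the number $g-1$ enters. The argument uses the same excision set-up as Proposition~\ref{prop: about H_2}. It also treats $g=1$ (infinite cyclic $\pi_1(L)$) uniformly with the other cases.

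The step you flagged as the main obstacle can be bypassed by naturality of the long exact sequences under $p\colon(\wt M,\wh N)\to(M,N)$. Since each $hW\to W$ is a homeomorphism, the induced map $\bigoplus_h H_2(hW,hL)\to H_2(W,L)\cong\Z$ is the coordinate sum. The map $H_2(M)\to H_2(M,N)\cong\Z$ is intersection with $\Sigma$. Commutativity then gives $\sum_h z\cdot h\wt\Sigma=p_*z\cdot\Sigma$ directly, with no intersection theory needed in the noncompact $\wt M$.

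One small remark: you only need $c\cdot u=1$ to conclude $k\in\Z$. Primitivity of $u$ is a by-product of that fact, not an extra input.
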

\begin{proof}
Note that $L$ is the total space of the spherical fiber bundle for the normal bundle of $\Sigma$:
\begin{equation}\label{eq: bundle}
   \gamma\cong  S^1\to L\to \Sigma.
\end{equation}
Here, the circle $\gamma$ is a generator of $\pi_1(L)\cong \Z/\langle g-1\rangle\Z$. If $\pi_1(L)\to \pi_1(N)$ were not injective, the self-intersection number of $\Sigma$ in $M$ would have to be less than $1-g$ in view of~\eqref{eq: bundle}. But the latter is not possible (\emph{cf}. Lemma~\ref{lem: pd u square}).
\end{proof}

\begin{lemma}\label{lem: intersections}
For $g\ge 1$, the intersection number of $\Sigma$ with the $2$-torus in $M$ determined by the Pontryagin product $a_i\cdot b_i\in H_2(M;\Z)$ is $1$ for $1\le i \le g$.
\end{lemma}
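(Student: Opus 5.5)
The intersection number is homological, so we compute $\langle PD(u)\cup PD(a_i\cdot b_i),[M]\rangle$ using the cohomology ring of $M=SP^2(M_g)$ from Subsection~\ref{sec: SP}. Since $\Sigma$ represents $u=c-\sum_{j=1}^g a_j\cdot b_j$, and the torus determined by $a_i\cdot b_i$ represents that Pontryagin product, this pairing is exactly the intersection number we want. As in Lemma~\ref{lem: pd u square}, we use
\[
PD(u)=c^*-\sum_{j=1}^g(c^*-a_j^*b_j^*)\quad\text{and}\quad PD(a_i\cdot b_i)=c^*-a_i^*b_i^*.
\]

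Expanding the product gives
\[
PD(u)\cdot PD(a_i\cdot b_i)=c^*(c^*-a_i^*b_i^*)-\sum_{j=1}^g(c^*-a_j^*b_j^*)(c^*-a_i^*b_i^*).
\]
Each term has already been evaluated in the proof of Lemma~\ref{lem: pd u square}:
\begin{itemize}
\item $c^*(c^*-a_i^*b_i^*)=0$ by Macdonald's relation with $r=s=1$.
\item For $j\ne i$, $(c^*-a_j^*b_j^*)(c^*-a_i^*b_i^*)=0$.
\item For $j=i$, $(c^*-a_i^*b_i^*)^2=-1$.
\end{itemize}
Hence the product equals $0-(-1)=1$. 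As a consistency check, summing this over $i$ recovers the second and third terms in the computation of $(PD(u))^2$.

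The main subtlety is fixing orientation conventions. We need the torus to be oriented so that it represents $a_i\cdot b_i$, and we need $(c^*)^2=1$ to hold with respect to the complex orientation. With the conventions already adopted in Lemma~\ref{lem: pd u square} and Subsection~\ref{sec: SP}, both hold, so the answer is $+1$.

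We also need $g\ge1$ so that at least one index $i$ exists. Alternatively, one can read the result off directly from the intersection matrix described in the proof of Lemma~\ref{lem: euler and sig}. There $c\cdot(a_i\cdot b_i)=0$ and $(a_j\cdot b_j)\cdot(a_i\cdot b_i)=-\delta_{ij}$, so $u\cdot(a_i\cdot b_i)=0-(-1)=1$.
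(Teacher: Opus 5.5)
Your proof is correct and follows the paper's route: both compute $\langle PD(a_i\cdot b_i)\smile PD(u),[M]\rangle$ from the Poincar\'e dual formulas of Subsection~\ref{sec: SP} together with Macdonald's relations. The only difference is bookkeeping. The paper rewrites $PD(u)=(1-g)c^*-\sum_j b_j^*a_j^*$ and then simplifies, whereas you group $PD(u)=c^*-\sum_j(c^*-a_j^*b_j^*)$ so that the three products already evaluated in Lemma~\ref{lem: pd u square} give $0-(-1)=1$ at once, which is slightly cleaner.
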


\begin{proof}
Recall from Subsection~\ref{sec: SP} that $PD(a_i\cdot b_i)=b_i^*a_i^*+c^*$ for $1 \le i \le g$, and so, $PD(u)=(1-g)c^*-\sum_{j=1}^g b_j^*a_j^*$. Then we have for $\langle PD(a_i\cdot b_i),u\rangle$ that
\[
PD(a_i\cdot b_i)\smile PD(u)= {\left(1-g\right)c^*b_i^*a_i^*}+{\left(1-g\right)(c^*)^2}-(c^*-a_i^*b_i^*)\sum_{j=1}^gb_j^*a_j^*,
\]
where we use the basic identities $(c^*)^2=1$ and $a_j^*b_j^*=-b_j^*a_j^*$ for each $j$. By what we said in the proof in Lemma~\ref{lem: pd u square}, the third term above simplifies to $-c^*b_i^*a_i^*$. Therefore, Macdonald's relations mentioned in Subsection~\ref{sec: SP} give
\[
PD(a_i\cdot b_i)\smile PD(u)= -gc^*b_i^*a_i^*+(1-g)(c^*)^2=1-gc^*(c^*-a_i^*b_i^*)=1.
\]
\begin{comment}
\[
=\left(1-g\right)c^*b_i^*a_i^*+(1-g)(c^*)^2-
\]
\[
= \left(1-g\right)c^*b_i^*a_i^*+{1-g}+b_i^*a_i^*\sum_{j\ne i}a_j^*b_j^*-{c^*\left(\sum_{j=1}^g\left(c^*-a_j^*b_j^*\right)-gc^*\right)}
\]
\[
= {\left(1-g\right)c^*b_i^*a_i^*}+{1-g}+b_i^*a_i^*\left(\sum_{j\ne i}\left(a_j^*b_j^*-c^*\right)\right)+(g-1)b_i^*a_i^*c^*+{g}
\]
\[
=1-\sum_{j\ne i}\left(a_i^*b_i^*\left(a_j^*b_j^*-c^*\right)\right)=1+0=1,
\]
\end{comment}
\end{proof}

Next, we describe the fundamental group of the sphere complement. 

\begin{prop}\label{prop: central extension}
For $g\ge 1$, there is a short exact sequence
\[
1\to\pi_1(L)\to\pi_1(N)\to \mathbb Z^{2g}\to 1,
\]
which is a central extension.
\end{prop}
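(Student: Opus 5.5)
Since $\Sigma$ has codimension two in $M=SP^2(M_g)$, I will compute $\pi_1(N)$ via Seifert--van Kampen and then identify the kernel and the centrality.

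\textbf{Step 1: the quotient.} Write $M=N\cup_L W$. Here $W\simeq\Sigma\simeq S^2$ is simply connected, and $L$ is the circle bundle \eqref{eq: bundle}. Van Kampen then gives
\[
\pi_1(M)\cong\pi_1(N)/\langle\langle \iota_*\pi_1(L)\rangle\rangle .
\]
The image $\iota_*\pi_1(L)$ is generated by the meridian circle $\gamma$, since $\pi_1(L)$ is cyclic generated by the fiber. We have $\pi_1(M)\cong\Z^{2g}$ (Subsection~\ref{sec: SP}), so this yields a surjection $\pi_1(N)\to\Z^{2g}$ whose kernel is the normal closure of $\gamma$.

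\textbf{Step 2: $\gamma$ is central.} This is the key step, and it is where Lemma~\ref{lem: intersections} enters. For each $i$, take the embedded torus $T_i$ representing $a_i\cdot b_i$. Make it transverse to $\Sigma$; after tubing away cancelling pairs of intersection points (or by choosing the representatives carefully), it meets $\Sigma$ in a single point, with sign $+1$ by Lemma~\ref{lem: intersections}. Then $T_i\cap N$ is a punctured torus $T_i^\circ$ whose boundary circle is a meridian, i.e.\ a fiber of $L$ freely homotopic to $\gamma$. The fiber of the normal bundle over a point of $\Sigma$ can be isotoped along $T_i$, so $\gamma$ is freely homotopic to loops lying on the boundary of a tubular neighborhood of $T_i^\circ$. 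That neighborhood is a circle bundle over $T_i^\circ$; it is trivial because a punctured surface has free $\pi_1$ and trivial $H^2$. Hence $\gamma$ commutes with lifts $\tilde a_i,\tilde b_i$ of the generators $a_i,b_i$, which are loops on $T_i^\circ$ pushed off. The elements $\tilde a_i,\tilde b_i$ together with $\gamma$ generate $\pi_1(N)$, since $\gamma$ normally generates the kernel and the $\tilde a_i,\tilde b_i$ map onto generators of $\Z^{2g}$. So $\gamma$ is central.

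The main obstacle is making this rigorous. I need a geometric realization of $a_i\cdot b_i$ as an embedded torus meeting $\Sigma$ geometrically once, or at least one whose punctured complement still carries the loops $a_i,b_i$. The Whitney trick is unavailable in dimension four, so I would try first an explicit model: $T_i$ is the image of $\alpha_i\times\beta_i$ in $SP^2(M_g)$, and $\Sigma$ is a fiber of the Abel--Jacobi map when the latter is available. Failing that, a homological alternative is to argue that in the presence of extra intersection points, the commutators $[\gamma,\tilde a_i]$ are products of conjugates of $\gamma^{\pm1}$ whose exponents sum to zero. Combined with the finiteness of $\langle\gamma\rangle$, this should force them to be trivial once centrality is established inductively.

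\textbf{Step 3: exactness on the left.} Centrality implies that the normal closure of $\gamma$ equals the cyclic subgroup $\langle\gamma\rangle=\iota_*\pi_1(L)$. By Proposition~\ref{prop: injective}, $\iota_*$ is injective, so the kernel is exactly $\pi_1(L)$. This gives the central extension $1\to\pi_1(L)\to\pi_1(N)\to\Z^{2g}\to1$.
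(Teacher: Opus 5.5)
\textbf{Step~2 has a genuine gap, and it is the step the whole proposition rests on.}

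Triviality of the normal circle bundle of $T_i^\circ$ shows only that the \emph{fiber} of that bundle, which is a meridian of $T_i$, commutes with push-offs of loops on $T_i^\circ$. That loop is not $\gamma$. It bounds a normal disk to $T_i$ inside $N$, so it is null-homotopic there and the commutation statement is vacuous.

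The loop $\gamma$ is the meridian of $\Sigma$, and it lies on $T_i^\circ$ as its boundary circle. In $\pi_1(T_i^\circ)\cong F(\tilde a_i,\tilde b_i)$ it is the commutator $[\tilde a_i,\tilde b_i]$. Its push-off into $T_i^\circ\times S^1$ is $([\tilde a_i,\tilde b_i],0)\in F_2\times\Z$, which commutes with the circle factor but with neither $\tilde a_i$ nor $\tilde b_i$. So the dual punctured torus gives only the relation $\gamma=[\tilde a_i,\tilde b_i]$ in $\pi_1(N)$. This is where the paper's proof starts; it then turns to tori representing $b_i\cdot a_i^{-1}$ and $a_i^{-1}\cdot b_i^{-1}$.

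Your fallback ``homological alternative'' is circular, since it invokes the centrality it is meant to prove. Separately, your generation claim is not valid group theory. That $\gamma$ normally generates the kernel and the lifts generate the quotient does not imply that $\gamma$ and the lifts generate $\pi_1(N)$; a nontrivial knot group and its meridian is the standard counterexample. So commutation with one choice of lifts would give centrality only if you also knew that $\gamma$ commutes with its own conjugates.

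\textbf{The case $g=1$ shows that no argument built only on dual tori can work.} There $M\to T^2$ is an $S^2$-bundle and $\Sigma$ is a fiber. The class $a_1\cdot b_1$ is represented by an embedded section torus meeting $\Sigma$ transversally exactly once, with trivial normal bundle over $T_1^\circ$. Every ingredient of your Step~2 is present. Yet $N\cong\mathring{T^2}\times S^2$, so the inclusion $T_1^\circ\hookrightarrow N$ is a $\pi_1$-isomorphism and $\pi_1(N)\cong F_2$. In $F_2$, $\gamma=[\tilde a_1,\tilde b_1]$ is not central, and the kernel of $F_2\to\Z^2$ is the commutator subgroup, which is free of infinite rank.

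Hence the statement as written fails for $g=1$. The paper's own passage from $\gamma=[a_i,b_i]$ to $[b_i,a_i^{-1}]=\gamma$ has the same defect. Since $[b_i,a_i^{-1}]=a_i\gamma a_i^{-1}$ identically, a punctured torus read in the basis $(b_i,a_i^{-1})$ determines $\gamma$ only up to this conjugation, which is exactly what is at issue. For $g\ge 3$ a correct proof needs genuinely new input controlling the normal closure of $\gamma$, i.e.\ $\pi_1(\wh N)$; the existence of dual tori alone cannot supply it.
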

\begin{proof}
The epimorphism in the sequence is defined by the inclusion $N\hookrightarrow M$, and the monomorphism is explained by Proposition~\ref{prop: injective}. By exactness, the kernel of $\pi_1(N)\to \Z^{2g}$ is normally generated by $\gamma$. We show that $\pi_1(L)=\langle\gamma\rangle$ is central.

We begin by showing that $\gamma=[a_i,b_i]$ in $\pi_1(N)$ for any $i\in\{1,\ldots g\}$; here we use the same notation $\gamma$ for its homotopy class $[\gamma]$ in $\pi_1(L)\subset \pi_1(N)$. The Pontryagin product $a_i\cdot b_i\in H_2(M;\Z)$ defines a singular $2$-torus $T_i$ in $M$ whose intersection number with $\Sigma$ is $1$ by Lemma~\ref{lem: intersections}. Because the fundamental group $\pi_1(M)\cong\Z^{2g}$ is \emph{good}, in the sense of Freedman and Quinn~\cite{FQ}, by performing the topological Whitney trick on $T_i$, we may assume that $T_i$ intersects $\Sigma$ at a single point. Thus, $\gamma$ is the boundary of a disk removed from the torus $T_i$, from which it follows that $\gamma$ is homotopic to the commutator $[a_i,b_i] =a_i^{-1}b_i^{-1}a_ib_i$. Now, since the intersection number of $-a_i\cdot b_i$ with $\Sigma$ is equal to $-1$ (\emph{cf}. Lemma~\ref{lem: intersections}) and since $b_i\cdot a_i=-a_i\cdot b_i$ (see, for instance,~\cite[Proposition~13.37]{Sw}), we get $b_i\cdot a_i^{-1}=-b_i\cdot a_i=a_i\cdot b_i$. Hence, $[b_i,a_i^{-1}]=\gamma$ as above. Then 
\[
a_i\gamma a_i^{-1}=a_i[a_i,b_i]a_i^{-1}=a_ia_i^{-1}b_i^{-1}a_ib_ia_i^{-1}=[b_i,a_i^{-1}]=\gamma.
\]
Since $a_i^{-1}\cdot b_i^{-1}=a_i\cdot b_i$,  we have that $[a_i^{-1},b_i^{-1}]=\gamma$. Then similarly we obtain
\[
b_i\gamma b_i^{-1}=b_i[b_i,a_i^{-1}]b_i^{-1}=b_ib_i^{-1}a_ib_ia_i^{-1}b_i^{-1}=[a_i^{-1},b_i^{-1}]=\gamma.
\]
Therefore, $\gamma$ commutes with $a_i$ and $b_i$ for each $i$.
\end{proof}

\subsection{The universal cover of $N$}\label{subsec: homology/homotopy}
Let $p:\widetilde M\to M$ be the universal covering of $M:=SP^2(M_g)$ for any fixed $g\ge 3$. We write $\wh N=p^{-1}(N)$, $\wh W=p^{-1}(W)$, and $\wh L=p^{-1}(L)$. Since the universal cover $\wt N$ of $N$ is the building block of the universal cover of $S_g$, we study the homology of $N$, $\wh N$, and $\wt N$. 

Moving forward, we adopt the convention of omitting coefficients in the case of \emph{integral} (co)homology, i.e., we write $H_*(-):=H_*(-;\Z)$ and $H^*(-):=H^*(-;\Z)$.

\begin{prop}\label{prop: about H_2}
We have that $H_2(\wh N)=H_2(\wh N,\wh L)=0$. 
\end{prop}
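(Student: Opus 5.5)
The plan is to compare $\wh N$ with the universal cover $\wt M$ through the long exact sequences of the pairs $(\wt M,\wh N)$ and $(\wh N,\wh L)$, using excision to reduce everything to the disjoint union of disk bundles $\wh W$. I expect the integral statement to need adjusting (see the last paragraph), and I would first prove the vanishing with $\R$ (or $\mathbb Q$) coefficients.

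\textbf{Step 1: the local pieces.} Since $\Sigma\cong S^2$ is simply connected, $\wh W=p^{-1}(W)$ is a disjoint union, indexed by the deck group $\Z^{2g}$, of copies of $W$. Here $W$ is the $D^2$-bundle over $S^2$ with Euler number $1-g$ (Lemma~\ref{lem: pd u square}), and $\wh L$ is the corresponding union of lens spaces $L=L_{g-1}$. Excision and the Thom isomorphism give
\[
H_k(\wt M,\wh N)\cong H_k(\wh W,\wh L)\cong \textstyle\bigoplus_{\Z^{2g}} H_{k-2}(\Sigma),
\]
which is $\bigoplus\Z$ for $k=2,4$ and zero otherwise. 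Also $H_2(L)=0$ and $H_1(L)=\Z_{g-1}$.

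\textbf{Step 2: $H_2(\wh N)$.} For $g\ge3$ and generic $M_g$, the composite $\mu$ is a holomorphic embedding, so $\wt M$ is a closed complex surface in $\C^{g}$, hence Stein. Therefore $H_k(\wt M)=0$ for $k\ge3$ and $H_2(\wt M)=\pi_2(M)$ is free. The exact sequence
\[
0=H_3(\wt M,\wh N)\to H_2(\wh N)\to H_2(\wt M)\xrightarrow{\ \iota\ } \textstyle\bigoplus_{\Z^{2g}}\Z
\]
identifies the map $\iota$ with taking intersection numbers with the translates $t\cdot\wh\Sigma$. So $H_2(\wh N)=0$ is equivalent to injectivity of $\iota$. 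To prove injectivity I would use that $u$ generates the spherical classes, so that $\pi_2(M)$ is generated as a $\Z[\Z^{2g}]$-module by $[\wh\Sigma]$. A class $z=\sum n_t\, t[\wh\Sigma]$ with $\iota(z)=0$ would then have vanishing equivariant intersection pairing with all generators. The aim is to show that the equivariant intersection form on this module, namely $(1-g)$ on the diagonal plus the contributions from distinct lifts meeting, is nondegenerate, for instance by pairing $z$ against the lifts of the tori $T_i$ of Lemma~\ref{lem: intersections}. \textbf{This step is the main obstacle}: the module structure of $\pi_2(M)$ and the equivariant intersections must be controlled precisely. If the direct approach fails, I would instead use Poincar\'e--Lefschetz duality $H_2(\wh N)\cong H^2_c(\wh N,\wh L)$ together with the Stein vanishing $H^k_c(\wt M)=0$ for $k<2$.

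\textbf{Step 3: the relative group.} Since $H_2(\wh L)=0$, the pair $(\wh N,\wh L)$ gives
\[
0\to H_2(\wh N)\to H_2(\wh N,\wh L)\to H_1(\wh L)\to H_1(\wh N).
\]
By Proposition~\ref{prop: central extension}, $\pi_1(\wh N)=\pi_1(L)=\Z_{g-1}$, and every component of $\wh L$ maps $H_1$ isomorphically onto it. So after Step~2 the relative group equals the kernel of the summation map $\bigoplus_{\Z^{2g}}\Z_{g-1}\to\Z_{g-1}$. This kernel is nonzero integrally, so I would state and prove the proposition with $\R$ (or $\mathbb Q$) coefficients, where all these torsion terms vanish. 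That rational version is what the later real Hurewicz argument requires.
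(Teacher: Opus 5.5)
Your reduction in Step~2, $H_2(\wh N)=\ker\iota$, is correct. However, the injectivity of $\iota$ that you then aim for is false, so this gap cannot be closed. You infer that $\pi_2(M)$ is generated over $\Z[\Z^{2g}]$ by $[\wh\Sigma]$ from the fact that $u$ spans the real spherical classes. That inference is invalid: the latter fact only describes the image of the coinvariants of $H_2(\wt M)$ in $H_2(M;\R)$, and says nothing about module rank.

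Here is a rank count showing the conclusion itself fails. Let $F$ be the fraction field of $\mathbb{Q}[\Z^{2g}]$. For a regular $\Z^{2g}$-cover of a finite complex, the alternating sum of $\dim_F\bigl(H_i(\,\cdot\,;\mathbb{Q})\otimes F\bigr)$ equals the Euler characteristic of the base.
\begin{itemize}
\item For $\wt M\to M$, use your Stein vanishing, $H_1(\wt M)=0$, and $H_0(\wt M;\mathbb{Q})\otimes F=0$. This gives $\dim_F\bigl(H_2(\wt M;\mathbb{Q})\otimes F\bigr)=\chi(M)=(2g-3)(g-1)\ge 6$. Hence $\pi_2(M)$ is not cyclic. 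No module map from it to the rank-one module $\bigoplus_{\Z^{2g}}\Z\cong\Z[\Z^{2g}]$ can be injective, and $\ker\iota$ has rank at least $(2g-3)(g-1)-1\ge 5$.
\item Independently, $\chi(N)=\chi(M)-\chi(W)+\chi(L)=2g^2-5g+1$. Moreover $H_1(\wh N;\mathbb{Q})=0$, since it is generated by translates of the meridian, which has order $g-1$. Also $H_4(\wh N)=0$. So $\dim_F\bigl(H_2(\wh N;\mathbb{Q})\otimes F\bigr)\ge 2g^2-5g+1\ge 4$.
\end{itemize}
Since $H_2(\wh L)=0$, it follows that $H_2(\wh N,\wh L;\R)\ne 0$ as well. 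So for every $g\ge 3$ the proposition fails with any coefficients. Neither your nondegeneracy approach nor your Lefschetz-duality fallback can rescue it.

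The paper's proof has the same defect. It asserts that $H_2(\wh W)\to H_2(\wt M)$ is an isomorphism, which is exactly the cyclicity claim above. That claim holds for $g=2$, where $\chi(M)=1$ and $\wt M$ is $\R^4$ blown up at the lattice points. It fails for $g\ge 3$ by the count above. Your Step~3 torsion observation, which uses Proposition~\ref{prop: central extension}, already contradicts it integrally, since excision would force $H_2(\wh N,\wh L)=0$.

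What does hold is that $H_2(\wh N;\R)\to H_2(N;\R)$ is zero, and a repaired version of the later real Hurewicz argument could use this instead:
\begin{itemize}
\item $H_2(N;\R)\to H_2(M;\R)$ is injective because $H_3(M,N)\cong H^1(\Sigma)=0$, and its image consists of the classes with zero intersection with $u$.
\item $H_2(\wt M;\R)$ maps onto $\R u$, and $u\cdot u=1-g\ne 0$.
\end{itemize}
So the image of $H_2(\wh N;\R)$ in $H_2(M;\R)$ lies in $\R u$ intersected with the classes orthogonal to $u$, which is zero.
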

\begin{proof}
Consider the following commutative diagram, where maps in homology are generated by the inclusion $(\wh N,\wh L)\to (\wt M,\wh W)$:
\[
\begin{tikzcd}
    H_2(\wh L) \arrow{r}\arrow{d}
    &
    H_2(\wh N) \arrow{r}\arrow{d}
    &
    H_2(\wh N,\wh L) \arrow{d}{\cong}
    &
    \\
    H_2(\wh W) \arrow{r}{j}
    &
    H_2(\wt M) \arrow{r}
    &
    H_2(\wt M,\wh W)\arrow{r}
    &
    H_1(\wh W)= 0.
\end{tikzcd}
\]
In this diagram, the right vertical arrow is the excision isomorphism. The map $\pi_2(\wh W)\to \pi_2(\wt M)$ induced by the inclusion $\wh W\to \wt M$ is an isomorphism, hence so is the map $j$. Therefore, $H_2(\wt M,\wh W) =0$, and so $H_2(\wh N,\wh L)=0$.  Since $H_2(L)=0$ and $\wh L=\bigsqcup L$, we obtain $H_2(\wh L)=0$. It follows that $H_2(\wh N)=0$.
\end{proof}

\begin{prop}\label{prop: dim h_3(N)}
We have that $H_3(N)\cong\Z^{2g}$.
\end{prop}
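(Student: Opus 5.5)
The plan is to compute $H_3(N)$ by Lefschetz duality and reduce it to a first cohomology group, which can be read off from $M=SP^2(M_g)$. Since $N$ is a compact oriented $4$-manifold with boundary $L$, Lefschetz duality gives
\[
H_3(N)\cong H^1(N,L).
\]
In the long exact cohomology sequence of the pair $(N,L)$, the map $H^0(N)\to H^0(L)$ is an isomorphism because both spaces are connected. The next term is $H^1(L)$, which vanishes because $L$ is the lens space $L_{g-1}$ with finite fundamental group $\Z_{g-1}$ and $g\ge 3$ (cf.\ \eqref{eq: bundle} and Lemma~\ref{lem: pd u square}). The sequence therefore collapses to an isomorphism
\[
H^1(N,L)\cong H^1(N).
\]
It remains to show that $H^1(N)\cong\Z^{2g}$.

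There are two routes to $H^1(N)$, and I would use the first, keeping the second as a check.

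\emph{Route via $\pi_1$.} Use $H^1(N)\cong \mathrm{Hom}(\pi_1(N),\Z)$. By Proposition~\ref{prop: central extension}, $\pi_1(N)$ is an extension of $\Z^{2g}$ by the finite group $\pi_1(L)\cong\Z_{g-1}$. Every homomorphism to $\Z$ kills the finite subgroup, so it factors through $\Z^{2g}$. This gives $\mathrm{Hom}(\pi_1(N),\Z)\cong \mathrm{Hom}(\Z^{2g},\Z)\cong\Z^{2g}$.

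\emph{Route via excision.} Use the pair $(M,N)$ and excision $H^*(M,N)\cong H^*(W,L)$. By the Thom isomorphism for the $D^2$-bundle $W\to\Sigma$, we have $H^1(W,L)=0$ and $H^2(W,L)\cong H^0(\Sigma)\cong\Z$, generated by the Thom class. The relevant portion of the sequence of $(M,N)$ is
\[
0=H^1(M,N)\to H^1(M)\to H^1(N)\to H^2(M,N)\cong\Z\to H^2(M).
\]
The last map sends the Thom class to $PD(u)=PD[\Sigma]$. This class is nonzero, indeed of infinite order, since its square is $1-g\neq 0$ by Lemma~\ref{lem: pd u square}. So that map is injective, and $H^1(N)\cong H^1(M)\cong\Z^{2g}$, using $\pi_1(M)\cong\Z^{2g}$ from Subsection~\ref{sec: SP}.

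Either route gives $H_3(N)\cong\Z^{2g}$. I do not expect a serious obstacle. The only points that need care are the vanishing of $H^1(L)$, which is where $g\ge 3$ (finite $\pi_1(L)$) is used, and, in the excision route, the identification of the image of the Thom class in $H^2(M)$ with $PD(u)$. If the identification of $L$ as a lens space were in doubt, the Gysin sequence of \eqref{eq: bundle} with Euler number $1-g\neq 0$ gives $H^1(L)=0$ directly.
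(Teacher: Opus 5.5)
Your proof is correct, but it takes a different route from the paper. The paper never passes through the boundary $L$. It uses the homology sequence of the pair $(M,N)$ in degrees $3$ and $4$, where excision and duality give $H_k(M,N)\cong H^{4-k}(\Sigma)$. Hence $H_3(M,N)\cong H^1(\Sigma)=0$, and $H_4(M)\to H_4(M,N)\cong\Z$ is an isomorphism (the fundamental class maps to the relative fundamental class of $(W,L)$). This gives $H_3(N)\cong H_3(M)\cong H^1(M)\cong\Z^{2g}$ in one step.

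Your argument dualizes on $N$ first, so it needs extra input:
\begin{enumerate}
\item $H^1(L)=0$, to replace $H^1(N,L)$ by $H^1(N)$;
\item either the structure of $\pi_1(N)$ or the non-vanishing of $PD[\Sigma]$, to compute $H^1(N)$.
\end{enumerate}
The paper's argument uses nothing about $L$, $\pi_1(N)$, or the self-intersection number, so it works verbatim for every $g$, including $g=1$ where $H^1(L)\cong\Z$. Yours needs $g\neq 1$ rather than $g\ge 3$ specifically, since $L\cong S^3$ for $g\in\{0,2\}$; that is harmless here. In return, your argument also yields $H^1(N)\cong H^1(M)$, i.e.\ $b_1(N)=2g$.

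One simplification for your $\pi_1$ route: you do not need the full strength of Proposition~\ref{prop: central extension} (injectivity and centrality). Van Kampen for $M=N\cup W$ with $\pi_1(W)=1$ already shows that $\ker(\pi_1(N)\to\pi_1(M))$ is normally generated by the image of the finite group $\pi_1(L)$. That is all you use to conclude $\mathrm{Hom}(\pi_1(N),\Z)\cong\mathrm{Hom}(\Z^{2g},\Z)$.
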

\begin{proof} 
Note that excision and Lefschetz duality give isomorphisms 
\begin{equation*}\label{eq: excision iso}
H_k(M,N)\cong H^{4-k}(\Sigma) \ \text{ and } \ H^k(M,N)\cong H_{4-k}(\Sigma)
\end{equation*}
for $0\le k\le 4$, so we have $H_3(M,N)\cong 0$ and $H_4(M,N)\cong \mathbb Z$. Since $H_4(N)\cong 0$, the homology long exact sequence for the pair $(M,N)$ gives 
\[
0\to\Z\cong H_4(M)\xlongrightarrow{j}\Z\cong H_4(M,N)\to H_3(N)\to H_3(M)\to 0.
\]
Here, $j$ is an isomorphism. Therefore, $H_3(N)\cong H_3(M)$. Since $M:=SP^2(M_g)$, Poincar\'e duality gives $H_3(M)\cong H^1(M)\cong \Z^{2g}$. 
\end{proof}

Next, we proceed to present a lower bound to $\rank(\pi_2(N))$.  

\begin{prop}\label{prop: dim h_3(G)}
For $G=\pi_1(N)$, we have that
\[
\dim H_3(G,\R)={2g \choose 3}.
\]
\end{prop}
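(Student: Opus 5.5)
We will compute $H_*(G;\R)$ from the central extension of Proposition~\ref{prop: central extension}, using the Lyndon--Hochschild--Serre spectral sequence with real coefficients. Since $g\ge 3$, the kernel $\pi_1(L)\cong\Z_{g-1}$ is a nontrivial finite cyclic group. Its real homology is therefore concentrated in degree $0$, where $H_0(\pi_1(L);\R)=\R$. Centrality is not even needed for the next point: because the coefficient module is $\R$, the action of $\Z^{2g}$ on $H_0$ is trivial. The $E^2$-page is
\[
E^2_{p,q}=H_p(\Z^{2g};H_q(\Z_{g-1};\R)),
\]
which vanishes for $q>0$ and equals $H_p(\Z^{2g};\R)$ for $q=0$.

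Since the $E^2$-page is concentrated on the row $q=0$, the spectral sequence collapses. The edge map $H_*(G;\R)\to H_*(\Z^{2g};\R)$, induced by the epimorphism $G\to\Z^{2g}$ of Proposition~\ref{prop: central extension}, is then an isomorphism in every degree. This is simply the standard fact that the quotient of a group by a finite normal subgroup induces a rational homology isomorphism. As a cross-check, one could also argue with a transfer map.

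It remains to use that the torus $T^{2g}$ is a $K(\Z^{2g},1)$. Hence
\[
H_3(\Z^{2g};\R)\cong H_3(T^{2g};\R)\cong\Lambda^3\R^{2g},
\]
which has dimension ${2g\choose 3}$. This gives $\dim H_3(G,\R)={2g\choose 3}$.

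There is essentially one point requiring care, and it is the identification of the kernel as finite. By Proposition~\ref{prop: injective}, $\pi_1(L)$ injects into $G$. We also need $L$ to be the lens space $L_{g-1}$, which has finite $\pi_1$ precisely because the Euler number $1-g$ of the normal bundle is nonzero (Lemma~\ref{lem: pd u square}). This uses $g\ne 1$, which the hypothesis $g\ge 3$ guarantees. Beyond that, the collapse of the spectral sequence is routine.
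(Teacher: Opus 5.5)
Your proposal is correct and follows the same route as the paper: the Lyndon--Hochschild--Serre spectral sequence for the extension $1\to\Z_{g-1}\to G\to\Z^{2g}\to 1$ collapses onto the row $q=0$ because $H_q(\Z_{g-1};\R)=0$ for $q>0$, so $H_3(G;\R)\cong H_3(\Z^{2g};\R)\cong\Lambda^3\R^{2g}$ has dimension $\binom{2g}{3}$. Your remark that centrality is not needed for the action on the coefficients is accurate. What centrality (via Proposition~\ref{prop: central extension}) does supply is that the kernel is exactly the finite group $\pi_1(L)$ rather than its normal closure, and you correctly take that as given.
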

\begin{proof}
Consider the short exact sequence of Proposition~\ref{prop: central extension} for $g\ge 3$,
\[
1\to\Z_m\to G\to \Z^{2g}\to 1,
\]
where $m=g-1\ge 2$. By the Hochschild--Serre spectral sequence, and the fact that $H_q(\Z_m;\R)=0$ for $q>0$, we obtain
\[
\dim H_3(G,\R)=\dim H_3(\Z^{2g},\R)=\binom{2g}{3}.
\]
\end{proof}
\begin{prop}\label{prop: dim h_2(wt N)}
For the universal cover $\wt N$ of $N$, we have that 
\[
\dim H_2(\Wi N;\R)\ge {2g\choose 3}-2g.
\]
\end{prop}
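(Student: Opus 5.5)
We compare $N$ with the classifying space $K(G,1)$, $G=\pi_1(N)$, through the Cartan--Leray (Hochschild--Serre) spectral sequence of the universal covering $\wt N\to N$:
\[
E^2_{p,q}=H_p(G;H_q(\wt N;\R))\Longrightarrow H_{p+q}(N;\R).
\]
Since $\wt N$ is simply connected, $H_1(\wt N;\R)=0$, so the row $q=1$ vanishes. In total degree $3$ the only contributions come from $E^\infty_{3,0}$, $E^\infty_{1,2}$ and $E^\infty_{0,3}$. Alternatively, we can use the standard exact sequence relating the edge map $H_3(N;\R)\to H_3(G;\R)$ to $H_2(\wt N;\R)$. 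In a spectral sequence whose row $q=1$ vanishes, the only possibly nonzero differential out of $E_{3,0}$ is
\[
d^3\colon E^3_{3,0}=H_3(G;\R)\to E^3_{0,2}=H_0(G;H_2(\wt N;\R))=H_2(\wt N;\R)_G .
\]
The image of the edge homomorphism $H_3(N;\R)\to H_3(G;\R)$ is $E^\infty_{3,0}=\ker d^3$. This gives the exact sequence
\[
H_3(N;\R)\to H_3(G;\R)\to H_2(\wt N;\R)_G\to H_2(N;\R)\to H_2(G;\R)\to 0.
\]

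The key steps, in order, are as follows.

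\begin{enumerate}
\item By Proposition~\ref{prop: dim h_3(N)}, $\dim H_3(N;\R)=2g$ (universal coefficients with $H_3(N)\cong\Z^{2g}$; any torsion in $H_2$ does not matter over $\R$).
\item By Proposition~\ref{prop: dim h_3(G)}, $\dim H_3(G;\R)=\binom{2g}{3}$.
\item Exactness at $H_3(G;\R)$ then shows that the image of $d^3$, i.e.\ the cokernel of the edge map, has dimension at least $\binom{2g}{3}-2g$. That image sits inside the coinvariants $H_2(\wt N;\R)_G$.
\item The coinvariants are a quotient of $H_2(\wt N;\R)$, so $\dim H_2(\wt N;\R)\ge\dim H_2(\wt N;\R)_G\ge\binom{2g}{3}-2g$, which is the claim.
\end{enumerate}

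The main point to check is the exactness of the sequence at $H_3(G;\R)$, in particular that no other differential can hit $E_{0,2}$ or kill more of $E_{3,0}$. Since $E^2_{p,1}=0$ for all $p$, the differential $d^2\colon E^2_{3,0}\to E^2_{1,1}$ is zero. Hence $E^3_{3,0}=H_3(G;\R)$, and $d^3$ is the last differential on $E_{3,0}$.

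One caveat: if $H_2(\wt N;\R)$ were infinite-dimensional, the inequality holds trivially. Otherwise the argument above gives the bound directly, so no finiteness assumption is needed.

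As a cleaner alternative, one can bypass the spectral sequence using Hopf's exact sequence. Build $K(G,1)$ from $N$ by attaching cells of dimension $\ge3$ to kill $\pi_2(N)\cong H_2(\wt N)$ and the higher homotopy groups. The $3$-cells are attached along generators of $\pi_2(N)\otimes\R=H_2(\wt N;\R)$ (as $\R G$-module generators), so $H_3(K,N;\R)$ is a quotient of the free module on them. The exact sequence
\[
H_3(N;\R)\to H_3(G;\R)\to H_3(K,N;\R)
\]
together with the identification $H_3(K,N;\R)\cong H_2(\wt N;\R)_G$ (Hurewicz applied to the $G$-cover) yields the same bound.
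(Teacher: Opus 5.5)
Your proposal is correct and follows essentially the same route as the paper. Both arguments use the Cartan--Leray (Borel fibration) spectral sequence with vanishing row $q=1$, the differential $d_3\colon H_3(G;\R)\to H_2(\wt N;\R)_G$ whose kernel is $E^\infty_{3,0}$, and the resulting count $\binom{2g}{3}-2g$ from Propositions~\ref{prop: dim h_3(N)} and~\ref{prop: dim h_3(G)}, while your Hopf-type alternative via $H_3(K,N;\R)\cong H_2(\wt N;\R)_G$ is a valid repackaging of the same argument.
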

\begin{proof}
Set $G=\pi_1(N)$. We consider the real homology spectral sequence (with local coefficients) for the Borel fibration 
\[
\wt N\to \wt N\times_GEG\to BG,
\]
see~\cite{Sp}. The total space of this fibration is homotopy equivalent to $N$, so
\begin{equation}\label{eq: dim sum}
    \dim H_3(N;\R)=\dim E_{3,0}^{\infty}+\dim E^{\infty}_{2,1}+\dim E^{\infty}_{1,2}+\dim E^{\infty}_{0,3},
\end{equation}
where all quantities are non-negative. Since $\wt N$ is simply connected, $E^2_{p,1}=0$ for all $p$. Thus, $E^r_{p,1}=E^2_{p,1}=0$ for each $r\ge 2$ and $p$. In particular, $E^{\infty}_{2,1}=0$. Similarly, we have $E^3_{p,0}=E^2_{p,0}$ for all $p$, which entails $E^3_{3,0}\cong H_3(G;\R)$. Because $E^r_{p,q}=0$ for each $r$ and $q$ if $p<0$, we get $E^r_{0,2}=E^2_{0,2}$ for all $r\ge 2$. In particular, we have $E^{3}_{0,2}\cong H_2(\wt N;\R)_G$, where the latter is the group of coinvariants. The first quadrant of the $E^3$ page looks as follows.
\begin{figure}[H]
    \centering
    \includegraphics[width=0.8\linewidth]{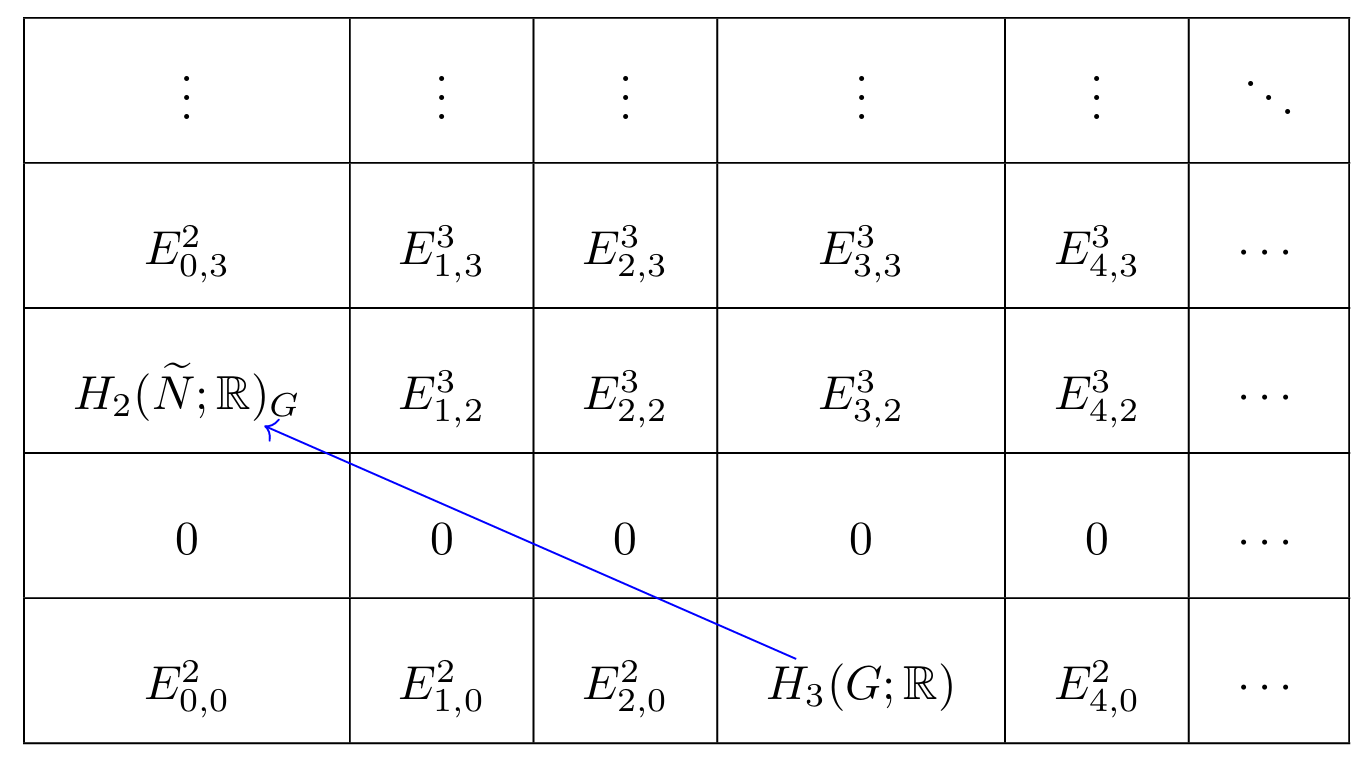}
%\caption{The first quadrant of the $E^3$ page of the spectral sequence.}
    \label{fig: spec seq}
\end{figure}
Here, the arrow in blue is the differential $d_3\colon E^3_{3,0}\to E^3_{0,2}$. Since $E^r_{p,q}=0$ for each $r$ and $q$ if $p<0$, we have $E^r_{3,0}=E^4_{3,0}$ for each $r\ge 4$, so that
\begin{equation}\label{eq: e infty page}
E^{\infty}_{3,0}=E^4_{3,0}=\Ker\left(d_3\colon E^3_{3,0}\to E^3_{0,2}\right).    
\end{equation}
From~\eqref{eq: dim sum} and~\eqref{eq: e infty page}, we get $\dim H_3(N;\R)\ge \dim \Ker(d_3)\ge \dim E^3_{3,0}-\dim E^3_{0,2}$. So,
\[
\dim H_2(\Wi N;\R)\ge \dim E^3_{0,2}\ge \dim E^3_{3,0} - \dim H_3(N;\R) = {2g \choose 3}-2g,
\]
where we use the facts that $E^{3}_{0,2}\cong H_2(\wt N;\R)_G$ and $E^3_{3,0}\cong H_3(G;\R)$, along with Propositions~\ref{prop: dim h_3(N)} and~\ref{prop: dim h_3(G)}. 
%note that because we are working in this proof in $\R$, it would suffice to have $b_3(N)=2g$, which is always the case.
\end{proof}

Since ${2g \choose 3}-2g>0$ if and only if $g\ge 3$, Proposition~\ref{prop: dim h_2(wt N)} implies the following.
\begin{cor}\label{nonzero}
We have $\pi_2(\wt N)\ne 0$ for $g\ge 3$.
\end{cor}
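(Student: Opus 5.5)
The corollary should follow from Proposition~\ref{prop: dim h_2(wt N)} together with the Hurewicz theorem, so the plan is short. First I check the numerical inequality. For $g\ge 3$,
\[
{2g\choose 3}-2g=\frac{2g(2g-1)(2g-2)}{6}-2g=\frac{2g\bigl((2g-1)(g-1)-3\bigr)}{3}.
\]
At $g=3$ this equals $20-6=14>0$. The factor $(2g-1)(g-1)-3$ is increasing in $g$, so the expression stays positive for all $g\ge 3$. (For $g\le 2$ the factor is $\le 0$, which matches the ``if and only if'' remark, but we only need the forward direction.) Proposition~\ref{prop: dim h_2(wt N)} then gives $H_2(\wt N;\R)\neq 0$.

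Next I pass from real to integral homology. Since $\R$ is flat over $\Z$, the universal coefficient theorem gives $H_2(\wt N;\R)\cong H_2(\wt N)\otimes\R$. Hence $H_2(\wt N)$ has positive rank; in particular it is non-zero.

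Finally I apply Hurewicz. The space $\wt N$ is the universal cover of the connected compact manifold $N$, so it is path-connected and simply connected. It is also a manifold, hence it has the homotopy type of a CW complex, and the Hurewicz theorem applies: $h_2\colon\pi_2(\wt N)\to H_2(\wt N)$ is an isomorphism. Therefore $\pi_2(\wt N)\cong H_2(\wt N)\neq 0$. Since $\pi_2(\wt N)\cong\pi_2(N)$, we also get $\pi_2(N)\neq 0$, and indeed $\operatorname{rank}\pi_2(N)\ge{2g\choose 3}-2g$.

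None of these steps is a real obstacle; all the substance sits in Proposition~\ref{prop: dim h_2(wt N)}. The only points needing care are that $\wt N$ is simply connected, which holds by definition of the universal cover, and that the passage between $\R$- and $\Z$-coefficients is legitimate, which is exactly the universal coefficient statement above.
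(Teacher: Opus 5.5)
Your proof is correct and follows the paper's argument: positivity of $\binom{2g}{3}-2g$ for $g\ge 3$, combined with Proposition~\ref{prop: dim h_2(wt N)} and the Hurewicz isomorphism for the simply connected space $\wt N$. You spell out the universal coefficient and Hurewicz steps, which the paper's one-line deduction leaves implicit.
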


\section{Proof of Theorem~\ref{thm: main}}\label{sec: proof section}

In this section, we first study the (co)homology of the closed $4$-manifold $S_g$ and then, using results thus far, complete the proof of our main result.

\subsection{(Co)homology groups of $S_g$}\label{sec: cohomology Xg}
Recall from Section~\ref{subsec: construction} that $S_g$ is obtained by gluing two homeomorphic copies of the $\Sigma$-sphere complement $N$ along its smooth boundary $L$. For $g\ne 1$, the self-intersection $1-g$ of $\Sigma$ in $SP^2(M_g)$ is non-zero (\emph{cf.} Lemma~\ref{lem: pd u square}), so the bundle~\eqref{eq: bundle} is non-trivial and $L$ is a closed 3-dimensional lens space (which is homeomorphic to $S^3$ if and only if $g\in\{0,2\}$).

\begin{lemma}\label{lem: betti1}
For $g\ne 1$, we have that $H_1(S_g)\cong H_3(S_g)\cong \Z^{4g}$. 
\end{lemma}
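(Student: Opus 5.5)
The plan is to compute $H_1(S_g)$ with the Mayer--Vietoris sequence for the decomposition $S_g=N\cup_\phi\ov N$, and then get $H_3(S_g)$ by Poincar\'e duality.

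\textbf{Step 1: compute $H_1(N)$.} I would use the long exact sequence of the pair $(M,N)$, where $M=SP^2(M_g)$, together with the isomorphisms $H_k(M,N)\cong H^{4-k}(\Sigma)$ from the proof of Proposition~\ref{prop: dim h_3(N)}. These give $H_1(M,N)\cong H^3(\Sigma)=0$ and $H_2(M,N)\cong H^2(\Sigma)\cong\Z$. So there is an exact sequence
\[
H_2(M)\to \Z\to H_1(N)\to H_1(M)\cong\Z^{2g}\to 0 .
\]
The image of $\Z\cong H_2(M,N)$ in $H_1(N)$ is generated by the class of the meridian circle $\gamma$, the fiber of~\eqref{eq: bundle}. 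The claim is that $[\gamma]=0$ in $H_1(N)$:
\begin{itemize}
\item For $g\ge 1$, the proof of Proposition~\ref{prop: central extension} shows $\gamma=[a_i,b_i]$ in $\pi_1(N)$. A commutator dies in the abelianization, so $[\gamma]=0$.
\item For $g=0$, $M=\C P^2$ and $H_1(M)=0$. Here $\Sigma$ has self-intersection $1$, so $\Sigma$ is a line and $N$ is a $4$-ball; in particular $H_1(N)=0$. (Directly: the connecting map $H_2(M)\to H_2(M,N)\cong\Z$ is given by intersection with $\Sigma$, which is onto since $\Sigma\cdot\Sigma=1$.)
\end{itemize}
Hence $H_1(N)\cong H_1(M)\cong\Z^{2g}$ in all cases. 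Moreover, the map $H_1(L)\to H_1(N)$ is zero, because $H_1(L)\cong\Z_{|g-1|}$ is generated by $[\gamma]$.

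\textbf{Step 2: Mayer--Vietoris.} Next I would write the Mayer--Vietoris sequence for $S_g=N\cup_L\ov N$:
\[
H_1(L)\xrightarrow{\ (i_*,\,-i_*)\ } H_1(N)\oplus H_1(\ov N)\to H_1(S_g)\to H_0(L)\to H_0(N)\oplus H_0(\ov N).
\]
\begin{itemize}
\item The last map is injective, since $L$ is connected (a lens space, as $g\ne1$).
\item The first map vanishes by Step 1; this applies to both copies, since $\phi$ is the identity of $L$.
\end{itemize}
Therefore $H_1(S_g)\cong H_1(N)\oplus H_1(\ov N)\cong\Z^{4g}$.

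\textbf{Step 3: $H_3$.} Since $S_g$ is a closed oriented $4$-manifold, Poincar\'e duality and the universal coefficient theorem give
\[
H_3(S_g)\cong H^1(S_g)\cong \mathrm{Hom}(H_1(S_g),\Z)\oplus \mathrm{Ext}(H_0(S_g),\Z).
\]
Here $H_0(S_g)=\Z$ is free, so the Ext term vanishes, and $H_1(S_g)\cong\Z^{4g}$ from Step 2. Hence $H_3(S_g)\cong\Z^{4g}$.

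\textbf{Main obstacle.} The only delicate point is Step 1: showing that the meridian $\gamma$ is null-homologous in $N$, and hence that $H_1(N)$ is torsion-free and equal to $\Z^{2g}$. I would rely on the identification $\gamma=[a_i,b_i]$ from Proposition~\ref{prop: central extension}. Failing that, the fallback is the same fact in homology form: Lemma~\ref{lem: intersections} says the torus $a_i\cdot b_i$ meets $\Sigma$ with intersection number $1$. So $\gamma$ bounds the punctured torus $T_i\cap N$ (after arranging a single intersection point), and the connecting map $H_2(M)\to H_2(M,N)\cong\Z$, which is given by intersection with $\Sigma$, is surjective. That surjectivity forces $H_1(N)\cong H_1(M)$.
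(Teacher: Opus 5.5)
Your proof is correct, but it works in degree $1$, whereas the paper works in degree $3$. The paper writes the Mayer--Vietoris sequence in the top degrees,
$0\to H_4(S_g)\to H_3(L)\to H_3(N)\oplus H_3(\ov N)\to H_3(S_g)\to H_2(L)=0$,
and feeds in $H_3(N)\cong\Z^{2g}$ from Proposition~\ref{prop: dim h_3(N)} to get $b_3(S_g)=4g$. Freeness of $H_3(S_g)\cong H^1(S_g)$ is then automatic, and $H_1(S_g)$ is read off from the universal coefficient theorem.

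That route needs no information about the meridian and no intersection numbers. However, its last step only determines the rank of $H_1(S_g)$. By duality and the universal coefficient theorem, the torsion of $H_1(S_g)\cong H^3(S_g)$ equals the torsion of $H_2(S_g)$, and this cannot be seen from $H^1(S_g)$.

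Your route computes $H_1(N)$ integrally. Its one genuinely new input is that the meridian $\gamma$ is null-homologous in $N$. Equivalently, the map $H_2(M)\to H_2(M,N)\cong\Z$, given by intersection with $\Sigma$, is onto by Lemma~\ref{lem: intersections}. This is exactly what excludes torsion coming from $H_1(L)\cong\Z_{|g-1|}$. So your argument proves the full integral statement, including the torsion-freeness of $H_1(S_g)$ that the paper relies on later in Proposition~\ref{prop: betti2}.

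Two small remarks. First, make your fallback homological argument the primary one: it avoids the Freedman--Quinn Whitney trick hidden in the identity $\gamma=[a_i,b_i]$. Second, for $g=0$ keep only the parenthetical intersection argument ($\Sigma\cdot\Sigma=1$). The claim that $\Sigma$ is a line, so that $N$ is smoothly a ball, is not justified as stated and is not needed, since only $H_1(N)=0$ enters.
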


\begin{proof}
Note that $H_4(N)\cong 0$ and $H_2(L)\cong 0$ since $g\ne 1$. 
    So, from the homology Mayer--Vietoris sequence for the pair $(N,L)$, we have 
    \[
   0\to H_4(S_g)\to H_3(L)\to H_3(N)\oplus H_3(N)\xlongrightarrow{j} H_3(S_g)\to 0.
    \]
Proceeding as in Proposition~\ref{prop: dim h_3(N)}, we get that $H_3(N)\cong \Z^{2g}$ for $g\ne 1$. Thus,
\[
\rank H_3(S_g) - 4g = \rank H_4(S_g)- \rank H_3(L)=1-1=0.
\]
%see, for instance,~\cite[Exercise~5.5]{Rot}. 
In particular, $b_3(S_g)=4g$. Therefore, $H_3(S_g)\cong\Z^{4g}$ in view of Poincar\'e duality and the fact that the first integral cohomology group is torsion-free. The universal coefficient theorem then gives $H_1(S_g)\cong \Z^{4g}$.
\end{proof}

\begin{prop}\label{prop: betti2}
    For $g\ge 2$, the closed smooth oriented $4$-manifold $S_g$ admits a near-symplectic structure.
\end{prop}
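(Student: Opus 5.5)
We will use the criterion recalled in Subsection~\ref{subsec: near-symplectic}: a closed oriented $4$-manifold carries a near-symplectic form if and only if some class $\alpha\in H^2(S_g;\R)$ satisfies $\alpha^2>0$, i.e.\ $\langle\alpha^2,[S_g]\rangle>0$. Equivalently, $b_2^+(S_g)\ge 1$. So the plan is to produce a single class of positive square. There are two natural routes, and we will pursue the first while keeping the second as a backup.

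\textbf{Route 1: a class supported in $N$.} The class should come from the $SP^2(M_g)$ side, extended by zero across $\ov N$, as in the proof of Proposition~\ref{prop: tori embedding}. The quotient $S_g/L\cong (M/\Sigma)\vee(M/\Sigma)$, where $M=SP^2(M_g)$. Classes in $H^2(M/\Sigma;\R)$ are the classes $x\in H^2(M;\R)$ with $\langle x,u\rangle=0$. Pulling $x\oplus 0$ back to $S_g$ gives a class $\alpha$ whose square is $\langle x^2,[M]\rangle$: the top cohomology of the wedge maps onto $H^4(S_g)$, sending $(s,t)\mapsto s-t$ up to the sign convention fixed by orientations.

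It therefore suffices to find $x\in H^2(M;\R)$ with $\langle x,u\rangle=0$ and $x^2>0$. We restrict $Q_M$ to $u^\perp$, the $Q_M$-orthogonal complement of $PD(u)$. By Lemma~\ref{lem: pd u square}, $PD(u)^2=1-g<0$ for $g\ge 2$. So $PD(u)$ spans a negative line, and $Q_M$ splits as $\langle 1-g\rangle\oplus Q_M|_{u^\perp}$. Hence
\[
b^+(u^\perp)=b_2^+(M)=g^2-g+1\ge 1
\]
by Lemma~\ref{lem: euler and sig}. Concretely, one can take $x$ orthogonal to $PD(u)$ inside $\mathrm{span}\{c^*,\,c^*-a_i^*b_i^*\}$, for instance $x=c^*+t\,PD(u)$ with $t$ chosen so that $\langle x,u\rangle=0$, and then check $x^2>0$. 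By the splitting this is automatic: writing $c^*=x'+s\,PD(u)$ gives $1=x'^2+s^2(1-g)$, hence $x'^2\ge 1>0$.

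\textbf{Main obstacle.} The delicate step is verifying that the sign in $H^4(S_g/L)\to H^4(S_g)$ does not cancel the contribution. Since $\alpha$ is supported in $N$ only, its square equals $x^2$ evaluated on the fundamental class of the $N$-half, which carries the orientation of $M$. So the square is $+x^2>0$, exactly as in the exactness argument of Proposition~\ref{prop: tori embedding}. We will also check that the excision $H^*(S_g,\ov N)\cong H^*(N,L)\cong H^*(M,W)$ respects this orientation.

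\textbf{Route 2 (alternative).} Use Mayer--Vietoris together with $\sigma(S_g)=0$ from~\eqref{eq: sign xg} and $b_2(S_g)=\chi(S_g)-2+2b_1(S_g)=4g^2-2g>0$, computed from~\eqref{eq: euler xg} and Lemma~\ref{lem: betti1}. Then $b_2^+(S_g)=b_2(S_g)/2=2g^2-g\ge 1$ for $g\ge 2$. This alone gives a class of positive square, since the intersection form on $H^2(S_g;\R)$ is nondegenerate by Poincar\'e duality and $\sigma=0$ forces $b_2^+=b_2^-$. This route is actually the cleanest, and we would present it as the main argument, with Route~1 supplying an explicit class. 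In either case, Taubes--Honda then yields the near-symplectic form.
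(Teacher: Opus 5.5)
Your Route~2 is exactly the paper's proof: $b_2(S_g)=4g^2-2g$ follows from $\chi(S_g)=4g^2-10g+2$ and $b_1=b_3=4g$ (Lemma~\ref{lem: betti1}), then $\sigma(S_g)=0$ forces $b_2^+(S_g)=2g^2-g\ge 1$, and Taubes--Honda gives the near-symplectic form. Your Route~1 is also correct. The splitting $H^2(M;\R)=\R\,PD(u)\oplus PD(u)^\perp$ with $PD(u)^2=1-g<0$ gives a class $x$ with $\langle x,u\rangle=0$ and $x^2>0$; extended by zero over $\ov N$, it has square $+\langle x^2,[M]\rangle$. This yields an explicit class of positive square supported in $N$, which is sharper than the nonvanishing shown in Proposition~\ref{prop: tori embedding}.
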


\begin{proof}
We first find $b_2(S_g)$. Note that by Lemma~\ref{lem: betti1} and~\eqref{eq: euler xg}, we get
    \[
    4g^2-10g+2=\chi(S_g)=2-8g+b_2(S_g),
    \]
which means $b_2(S_g)=4g^2-2g$ for $g\ne 1$. Since $H_1(S_g)\cong H^3(S_g)$ is torsion-free, the universal coefficient theorem implies that $H^2(S_g)\cong \Z^{4g^2-2g}\cong H_2(S_g)$. Next, because $\sigma(S_g)=0$ by~\eqref{eq: sign xg}, we also have 
\[
b_2^+(S_g)=2g^2-g=b_2^-(S_g)
\]
for $g\ne 1$. Since $b_2^+(S_g)\ge 1$ for $g\ge 2$, there exists a cohomology class whose square induces the given orientation of $S_g$, so it follows from~\cite{Taubes2,Honda} that there exists a near-symplectic form on $S_g$. More precisely, such near-symplectic forms are obtained for \emph{generic} Riemannian metrics on $S_g$ from self-dual harmonic forms (see also~\cite[Proposition~1]{ADK}).
\end{proof}

We deduce from our results that, similar to the case of $SP^2(M_g)$ for all $g\ge 0$, the (co)homology of $S_g=SP^2(M_g)\ \#_{\Sigma}\ \ov{SP^2(M_g)}$ is torsion-free for $g\ne 1$. 
%For cohomology, it is direct. For homology, it is by universal coefficient theorem (UCT). Indeed, if $H_2(S_g)$ has torsion, then so does $H^3(S_g)$ by UCT.
Moreover, it is also easy to show that
%\[ b_i(S_g)= 2b_i(N)\ \text{ for }i\in\{1,2,3\}. \]
%Thus, $b_1(N)=2g=b_3(N)$ and $b_2(N)=2g^2-g$, and 
the Euler characteristic of the compact $4$-manifold $N\cong SP^2(M_g)\setminus\Sigma$ for $g\ne 1$ is $\chi(N)=2g^2-5g+1$.

We conclude this subsection with the observation that the closed $4$-manifolds $S_g$ in Theorem~\ref{thm: main} are never complex surfaces (compare with Proposition~\ref{prop: no almost cplx}).

\begin{prop}\label{prop: no complex}
    For $g\ge 3$, the closed smooth oriented $4$-manifold $S_g$ cannot support a complex structure.
\end{prop}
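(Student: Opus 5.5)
Suppose, for contradiction, that $S_g$ carries a complex structure for some $g\ge 3$. The plan is to rule this out with the Kodaira classification of compact complex surfaces, using the numerical invariants already computed for $S_g$. By Lemma~\ref{lem: betti1}, $b_1(S_g)=4g$ is even. For compact complex surfaces, $b_1$ even is equivalent to being K\"ahler (Kodaira, Miyaoka, Siu, Buchdahl--Lamari), so $S_g$ would be a K\"ahler surface. This places us in the K\"ahler setting, where several topological constraints are available.

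First, we use the signature identity for K\"ahler surfaces. The Hodge index theorem gives $b_2^+=2p_g+1$. By Proposition~\ref{prop: betti2} we have $b_2^+(S_g)=2g^2-g$, so we need $2g^2-g$ to be odd, i.e.\ $g$ odd. This disposes of every even $g$; Proposition~\ref{prop: no almost cplx} also covers that case. For odd $g$ we need a further obstruction. I would use the Albanese map together with the structure of $\pi_1$.

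For this step, note that $H^1(S_g;\R)$ splits as the classes coming from the two copies of $N$. Each copy carries the $2g$ classes pulled back from $T^{2g}$ via $N\hookrightarrow SP^2(M_g)$, and these vanish on $L$, since $H^1(L;\R)=0$. A K\"ahler surface requires the cup product $\Lambda^2H^1\to H^2$ to interact compatibly with the Hodge decomposition; in particular, products of a $(1,0)$-form with its conjugate cannot vanish. However, any class supported in the first copy of $N$ cups to zero with any class supported in the second copy, because their supports are disjoint away from $L$ after choosing representatives vanishing near $L$. So $H^1(S_g;\C)=A\oplus B$ with $A\smile B=0$, and the products within each of $A$ and $B$ are nondegenerate. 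I would then invoke the Castelnuovo--de Franchis type argument, or better the result that for a compact K\"ahler manifold, a splitting of $H^1$ into cup-orthogonal, Hodge-compatible pieces forces a fibration onto a curve. Alternatively, I would argue directly that for a holomorphic $1$-form $\alpha$, $\int\alpha\wedge\bar\alpha\wedge\omega>0$, together with a top-degree product argument. Concretely, in $SP^2(M_g)$ we have $a_i^*b_i^*c^*\ne0$ while $(a^*)^4=0$, and the key input is that $H^1$ of $S_g$ maps to two Albanese tori whose images are surfaces.

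The main obstacle is making the Hodge-theoretic step rigorous: we need that the subspaces $A$ and $B$ are Hodge substructures. I would try to obtain this from their characterization as annihilators, namely $A=\{x\in H^1 : x\smile B=0\}$. If the sum of two annihilator decompositions is preserved by conjugation and by the K\"ahler Lefschetz operator, then a holomorphic $1$-form $\alpha\in A\otimes\C$ of Albanese dimension two together with one $\beta\in B$ gives $\alpha\wedge\beta=0$ as forms. Castelnuovo--de Franchis then yields a fibration $S_g\to C$ over a curve of genus $\ge 2$, through which both families factor. That contradicts $\alpha_1\wedge\alpha_2\ne0$ within $A$, which in turn comes from $a_i^*b_i^*\ne 0$ on $N$. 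If this becomes too delicate, the fallback is the Seiberg--Witten route. A K\"ahler surface with $b_2^+>1$ has nonzero Seiberg--Witten invariants. But $S_g$ is a generalized connected sum along a sphere of self-intersection $\pm(g-1)$ with both sides having $b_2^+>0$. Here I would apply a vanishing theorem for such splittings along lens spaces that carry positive scalar curvature metrics.
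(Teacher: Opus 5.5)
Your reduction to the K\"ahler case via $b_1(S_g)=4g$ is fine. So is the parity obstruction $b_2^+=2p_g+1$ for even $g$, and so is the splitting $H^1(S_g;\R)=A\oplus B$ with $A\smile B=0$. That splitting follows from Mayer--Vietoris, using $H^1(L;\R)=0$ and the injectivity of $H^2(S_g;\R)\to H^2(N;\R)\oplus H^2(\ov N;\R)$.

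\textbf{The gap.} It is exactly the step you flag for odd $g$, and your proposed remedy does not close it. Nothing shows that $A^{1,0}:=(A\otimes\C)\cap H^{1,0}$ and $B^{1,0}$ are nonzero. A real $2g$-dimensional subspace of $H^1(S_g;\R)\cong\R^{4g}$ can perfectly well be totally real for the Hodge complex structure. Without more input you therefore have no holomorphic $\alpha\in A\otimes\C$ and $\beta\in B\otimes\C$ to feed into Castelnuovo--de Franchis. Writing $A=\mathrm{Ann}(B)$ only transfers the question to $B$. The Lefschetz operator $H^1\to H^3$ says nothing about which subspaces of $H^1$ are Hodge.

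\textbf{One repair.}
\begin{enumerate}
\item For $g\ne1$, the map $\Lambda^2H^1(N;\R)\to H^2(N;\R)$ is injective. Macdonald gives $H^2(M;\R)=\R c^*\oplus\Lambda^2H^1$. The kernel of $H^2(M)\to H^2(N)$ is spanned by $PD(u)$, whose $c^*$-coefficient is $1-g$.
\item Hence the kernel of $\Lambda^2H^1(S_g)\to H^2(S_g)$ is exactly $A\wedge B$. Consequently $(A\cup B)\otimes\C=\{x:\dim\mathrm{Ann}(x)\ge 3\}$, a set determined by the cup product alone.
\item The circle acting by $\lambda^{p-q}$ on $H^{p,q}$ acts by real algebra automorphisms, so it preserves this union. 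Being connected, it preserves $A$ and $B$ individually. So both are Hodge substructures, with $\dim A^{1,0}=\dim B^{1,0}=g$.
\item Fix $0\ne\beta\in B^{1,0}$. For every $\alpha\in A^{1,0}$, the holomorphic $2$-form $\alpha\wedge\beta$ is exact, hence identically zero. So all of $A^{1,0}$ is pointwise proportional to $\beta$, and $\alpha_1\wedge\alpha_2\equiv0$ for $\alpha_1,\alpha_2\in A^{1,0}$.
\item This contradicts the injectivity of $\Lambda^2(A\otimes\C)\to H^2$, since $g\ge2$. Note that it is this injectivity you need, not merely $a_i^*b_i^*\ne0$ on $N$, because the holomorphic forms are unknown combinations.
\end{enumerate}
This handles all $g\ge 2$ at once, so the parity step becomes unnecessary.

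\textbf{The Seiberg--Witten fallback.} It is a sound alternative, but as written it is only a name-check. You need the vanishing theorem for $X=X_1\cup_Y X_2$ with $Y$ a rational homology sphere carrying positive scalar curvature and $b_2^+(X_i)\ge1$. The pieces that matter are $N$ and $\ov N$, not $SP^2(M_g)$ and $\ov{SP^2(M_g)}$. Since $u^\perp\subset H_2(M;\R)$ has type $(g^2-g+1,\,g^2-1)$, you get $b_2^+(N)=g^2-g+1$ and $b_2^+(\ov N)=g^2-1$, both positive for $g\ge2$.

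\textbf{Comparison with the paper.} The paper avoids all of this. It observes that $\pi_1(S_g)\cong\pi_1(N)\ast_{\Z_m}\pi_1(N)$ is a nontrivial amalgam over a finite group. Such a group has infinitely many ends and so is not a K\"ahler group (Gromov, Arapura--Bressler--Ramachandran). Combined with the same $b_1$-even step, this finishes the proof in two lines. Your routes are heavier, but the Seiberg--Witten one yields more. Vanishing of $SW_{S_g}$ together with Taubes' theorem would also exclude symplectic structures on $S_g$ for odd $g$, which the paper leaves open.
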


\begin{proof}
In view of the construction of $S_g$, the Seifert--van Kampen theorem implies that $\pi_1(S_g)$ is the non-trivial amalgamated free product
\begin{equation}\label{eq: amalgam}
\pi_1(S_g)\cong \pi_1(N) \ast_{\Z_m} \pi_1(N).
\end{equation}
Thus, $\pi_1(S_g)$ cannot be the fundamental group of any closed K\"ahler manifold, see~\cite{Gr,ABR}. In particular, $S_g$ is not K\"ahler. But $b_1(S_g)$ is even by Lemma~\ref{lem: betti1}, so our assertion follows from the positive solution to Kodaira's conjecture for compact complex surfaces (see, for instance,~\cite[Remark~1.30]{ABCKT}).
\end{proof}

\subsection{Real Hurewicz map and second homotopy group}\label{sec: real Hurewicz}
Let $p_N\colon \wt N\to N$ and $p_g\colon\wt S_g\to S_g$ denote the universal coverings, where $g\ge 3$ and $m=g-1\ge 2$, and let $\Wi L=p^{-1}_N(L)$. Note that $\wt L=\bigsqcup S^3$ since the map $\Z_m\cong\pi_1(L)\to\pi_1(N)$ induced by $L\hookrightarrow N$ is injective (\emph{cf}. Proposition~\ref{prop: injective}). In view of~\eqref{eq: amalgam}, the inclusion $N\hookrightarrow S_g$ is $\pi_1$-injective and there is an embedding $\wt N\to\wt S_g$.

\begin{prop}\label{prop: pi_2 injective}
The map $\pi_2(\Wi N)\to\pi_2(\Wi S_g)$ induced by an embedding $\wt N\to \wt S_g$ is injective for $g\ge 3$. In particular, $\pi_2(S_g)\otimes\R\ne 0$ for $g\ge 3$.
\end{prop}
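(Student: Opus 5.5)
Since $\wt N$ and $\wt S_g$ are simply connected, Hurewicz identifies $\pi_2$ with $H_2$ for both. It therefore suffices to show that $H_2(\wt N)\to H_2(\wt S_g)$ is injective. The plan is to describe $\wt S_g$ as a tree of copies of $\wt N$ and apply Mayer--Vietoris.

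By the amalgamated free product decomposition~\eqref{eq: amalgam}, $\pi_1(S_g)$ acts on the Bass--Serre tree $T$ of the splitting. The universal cover $\wt S_g$ is then a union of translates of $\wt N$ (one for each vertex of $T$), glued along components of $\wt L$ (one for each edge of $T$). Each edge space is a component of $\wt L=\bigsqcup S^3$, since $\pi_1(L)\cong\Z_m$ injects into both factors by Proposition~\ref{prop: injective}. So $\wt S_g$ is a tree of spaces whose vertex spaces are copies of $\wt N$ and whose edge spaces are $3$-spheres.

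Fix the vertex space $\wt N_0$. Exhaust $\wt S_g$ by finite subtrees $T_k$ containing the vertex of $\wt N_0$, and let $X_k$ be the corresponding union, so that $X_{k+1}=X_k\cup_{S^3}\wt N'$ for a new vertex space $\wt N'$. In the Mayer--Vietoris sequence
\[
H_2(S^3)\to H_2(X_k)\oplus H_2(\wt N')\to H_2(X_{k+1}),
\]
the left term vanishes, so $H_2(X_k)\to H_2(X_{k+1})$ is injective. Each gluing is along whole $S^3$'s, which is a separating codimension-one boundary component, so the Mayer--Vietoris sequence applies cleanly, for instance by using collars. By induction $H_2(\wt N_0)\to H_2(X_k)$ is injective for every $k$. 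Homology commutes with direct limits and every compact cycle lies in some $X_k$, so $H_2(\wt N_0)\to H_2(\wt S_g)$ is injective.

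The main point needing care is identifying $\wt S_g$ with this tree of spaces, and in particular checking that the preimage of $N$ in $\wt S_g$ consists of copies of the universal cover $\wt N$ rather than some intermediate cover. This is exactly $\pi_1$-injectivity of $N\hookrightarrow S_g$, which holds for an amalgam of injective inclusions.

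For the final claim, Corollary~\ref{nonzero}, or more precisely Proposition~\ref{prop: dim h_2(wt N)}, gives $\dim H_2(\wt N;\R)\ge\binom{2g}{3}-2g>0$. Running the same Mayer--Vietoris argument with real coefficients gives an injection $H_2(\wt N;\R)\to H_2(\wt S_g;\R)$. Since
\[
\pi_2(S_g)\otimes\R\cong\pi_2(\wt S_g)\otimes\R\cong H_2(\wt S_g;\R),
\]
this group is nonzero for $g\ge3$.
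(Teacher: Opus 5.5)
Your proof is correct and uses the same mechanism as the paper: Hurewicz in the simply connected covers, plus Mayer--Vietoris exploiting the fact that the gluing locus is a disjoint union of $3$-spheres with vanishing $H_2$.

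There are two differences in execution.

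\emph{The splitting.} The paper cuts only once. It writes $\wt S_g=\wt N\cup Y$, with $Y$ the closure of $\wt S_g\setminus\wt N$ and $\wt N\cap Y=\wt L=\bigsqcup S^3$. Then $H_2(\wt L)=0$ makes $H_2(\wt N)\oplus H_2(Y)\to H_2(\wt S_g)$ injective immediately. This avoids the Bass--Serre tree description, the exhaustion by finite subtrees, and the direct-limit step. Your inductive version is valid, but it needs more structure than the statement requires.

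\emph{The last step.} Your real-coefficient finish is more economical than the paper's. The paper first deduces integral $\pi_2(S_g)\ne 0$. It then needs the torsion-freeness Lemma~\ref{lem: pi2 torsion-free} to pass to $\pi_2(S_g)\otimes\R$. Corollary~\ref{nonzero} is itself obtained from a real dimension count, so working over $\R$ throughout, via $\pi_2(S_g)\otimes\R\cong H_2(\wt S_g;\R)$, makes that lemma unnecessary here.
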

\begin{proof}
For the first assertion, it suffices to show, in view of the Hurewicz theorem, that the map $H_2(\Wi N)\to H_2(\Wi S_g)$ is injective. Let $Y$ be the closure of $\Wi S_g\setminus \Wi N$, so that $\Wi N\cap Y=\Wi L$. From the homology Mayer--Vietoris sequence for the pair $(\wt N,Y)$,
\[
\cdots\to H_2(\Wi L) \to H_2(\Wi N)\oplus H_2(Y)\to H_2(\Wi S_g)\to\cdots,
\]
and the fact that $H_2(\Wi L)=0$, it follows that $H_2(\Wi N)\to H_2(\Wi S_g)$ is injective. Now, Corollary~\ref{nonzero} implies that $\pi_2(S_g)\ne 0$ for $g\ge 3$. Since the second homotopy group of a closed $4$-manifold is always torsion-free (see Lemma~\ref{lem: pi2 torsion-free} below), it follows that $\pi_2(S_g)\otimes \R\cong\pi_2(S_g)\ne 0$.    
\end{proof}

\begin{prop}
    For $g\ge 3$, the second real Hurewicz map 
    \[
    h_2\otimes 1_\R\colon \pi_2(S_g)\otimes\R\to H_2(S_g;\R)
    \]
    is trivial. In particular, each closed differential $2$-form on $S_g$ is aspherical if $g\ge 3$.
\end{prop}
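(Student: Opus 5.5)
The image of the real Hurewicz map equals the image of $H_2(\wt S_g;\R)\to H_2(S_g;\R)$ under the covering projection $p_g$. This holds because $\pi_2(S_g)\cong\pi_2(\wt S_g)\cong H_2(\wt S_g)$ and Hurewicz is natural. So it suffices to show that $(p_g)_*\colon H_2(\wt S_g;\R)\to H_2(S_g;\R)$ is zero. Equivalently, by duality, we must show that every class $\alpha\in H^2(S_g;\R)$ pulls back to zero on $\wt S_g$, that is, $\langle\alpha,(p_g)_*z\rangle=0$ for all $z\in H_2(\wt S_g;\R)$.

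\textbf{Step 1: decompose $\wt S_g$.} I will use the tree-of-spaces structure coming from the amalgam~\eqref{eq: amalgam}. The space $\wt S_g$ is a union of copies of $\wt N$ (lifts of either copy of $N$), glued along components of $\wt L$, and each such component is a copy of $S^3$. Apply Mayer--Vietoris to this decomposition, or to an exhaustion by finite subtrees. Since $H_2(S^3)=H_1(S^3)=0$, we get
\[
H_2(\wt S_g;\R)\cong\bigoplus H_2(\wt N_v;\R).
\]
Hence every class in $H_2(\wt S_g;\R)$ is a finite sum of classes supported in single copies $\wt N_v$. Each $(p_g)_*$ of such a piece factors as
\[
H_2(\wt N;\R)\to H_2(N;\R)\to H_2(S_g;\R).
\]
So it is enough to show that $H_2(\wt N;\R)\to H_2(N;\R)$ is zero.

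\textbf{Step 2: pass to the intermediate cover $\wh N$.} The covering $\wt N\to N$ factors through $\wh N=p^{-1}(N)\subset\wt M$. Its deck group is $\pi_1(N)$, and this group maps onto $\Z^{2g}$ with kernel $\Z_m$. Thus $\wt N\to\wh N$ is the covering corresponding to the subgroup $\Z_m$, and it is finite-sheeted. By Proposition~\ref{prop: about H_2}, $H_2(\wh N)=0$, so $H_2(\wh N;\R)=0$. The map $H_2(\wt N;\R)\to H_2(N;\R)$ factors through $H_2(\wh N;\R)=0$, and is therefore zero. This proves the claim.

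I should check that $\wt N\to\wh N$ really is a covering. Since $\pi_1(N)\to\pi_1(M)$ is onto (Proposition~\ref{prop: central extension}), $\wh N$ is connected with $\pi_1(\wh N)=\ker=\Z_m$. Then $\wt N$ is its universal cover, as required. In fact only the factorization through $\wh N$ is needed, not finiteness of the covering.

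\textbf{Step 3: consequences.} From the above, every spherical real class in $S_g$ vanishes. Any closed $2$-form $\omega$ therefore satisfies $\int_{S^2}f^*\omega=\langle[\omega],f_*[S^2]\rangle=0$. Combined with Proposition~\ref{prop: tori embedding}, Proposition~\ref{prop: betti2} and Proposition~\ref{prop: pi_2 injective}, this finishes Theorem~\ref{thm: main}.

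The main obstacle I expect is the careful justification of the Mayer--Vietoris splitting over an infinite tree of spaces. I would handle it by writing $\wt S_g$ as a direct limit of finite subtrees. Homology commutes with direct limits, and at each finite stage the gluing is along $3$-spheres, so induction on the number of vertices gives the direct-sum decomposition.
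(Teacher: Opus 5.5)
Your reduction is sound, and it is a mild variant of the paper's. The paper passes to the intermediate cover $\wh S_g$ and claims $H_2(\wh S_g;\R)=0$. You instead split $H_2(\wt S_g;\R)$ by Mayer--Vietoris over the tree of copies of $\wt N$, which is valid because the edge spaces are rational homology $3$-spheres. This reduces everything to showing that $H_2(\wt N;\R)\to H_2(N;\R)$ vanishes.

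\textbf{The gap is in Step~2.} It rests on Proposition~\ref{prop: about H_2}, but the assertion $H_2(\wh N)=0$ is false for $g\ge 3$. Here is why:
\begin{itemize}
\item $\wh N\to N$ is a regular $\Z^{2g}$-cover of a compact manifold. Let $K$ be the fraction field of $\mathbb{Q}[\Z^{2g}]$, which is flat over $\mathbb{Q}[\Z^{2g}]$, and set $h_k=\dim_K\bigl(H_k(\wh N;\mathbb{Q})\otimes K\bigr)$. Then $\chi(N)=h_0-h_1+h_2-h_3+h_4$.
\item $h_0=0$: for $t\ne 1$, the element $t-1$ kills $H_0$ but is invertible in $K$. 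Also $h_4=0$, since $\wh N$ has nonempty boundary.
\item Hence $h_2=\chi(N)+h_1+h_3\ge 2g^2-5g+1>0$.
\end{itemize}
The error in the paper's proof of that proposition is the claim that $\pi_2(\wh W)\to\pi_2(\wt M)$ is an isomorphism. The same count shows that $\pi_2(M)\cong H_2(\wt M)$ has $K$-rank at least $\chi(M)=2g^2-5g+3\ge 6$, whereas $H_2(\wh W)\cong\Z[\Z^{2g}]$ has rank one. Consequently the paper's intermediate claim $H_2(\wh S_g;\R)=0$ also fails, because $H_2(\wh N;\R)$ is a Mayer--Vietoris summand of it. So factoring through $H_2(\wh N;\R)$, as you do in Step~2, does not go through as it stands.

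\textbf{The fix.} You only need the \emph{map} $H_2(\wt N;\R)\to H_2(N;\R)$ to vanish, and this is true:
\begin{enumerate}
\item $H_2(N;\R)\to H_2(M;\R)$ is injective, since $H_3(M,N)\cong H_3(W,L)\cong H_1(\Sigma)=0$.
\item The composite $H_2(\wt N;\R)\to H_2(N;\R)\to H_2(M;\R)$ factors through $H_2(\wt M;\R)$ (lift $\wt N\to N\hookrightarrow M$ to $\wt M$). So its image consists of real spherical classes of $M$, and therefore lies in $\R u$ by Subsection~\ref{sec: SP}.
\item Every class coming from $N$ has zero intersection number with $u=[\Sigma]$. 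This is because $PD(u)$ is the image of the Thom class in $H^2(M,N)$, so it restricts to zero on $N$.
\item If $tu$ lies in the image, then $t(1-g)=t\,u\cdot u=0$ by Lemma~\ref{lem: pd u square}, so $t=0$.
\end{enumerate}
By injectivity in (1), the map $H_2(\wt N;\R)\to H_2(N;\R)$ is zero. This holds for both copies $N$ and $\ov N$, which are the same space.

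With this in place of Step~2, your Steps~1 and~3 give a complete proof. The same observation also repairs the paper's argument: replace ``$H_2(\wh S_g;\R)=0$'' by ``$H_2(\wh S_g;\R)\to H_2(S_g;\R)$ is zero''.
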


\begin{proof}
Let $\wh S_g$ be the normal covering of $S_g$ corresponding to the normal subgroup $\Z_m\subset \pi_1(N)\subset\pi_1(S_g)$. Then $\wh S_g$ is obtained from building blocks $\wh N$ (described in Subsection~\ref{subsec: homology/homotopy}) which are glued along copies of $L$. We can write 
\[
\wh S_g=\bigcup_{i=1}^\infty\wh N.
\]
Let $\wh S_g^n=\bigcup_{i=1}^n\wh N$ for $n\ge 1$. Note that $H_i(L;\R)=0$ for $i\in\{1,2\}$. Proceeding by induction on $n$ (the case $n=1$ being trivial in view of Proposition~\ref{prop: about H_2}), we use Mayer--Vietoris sequence for $\wh S_g^n$ to conclude that $H_2(\wh S_g^n;\R)=0$ for each $n\ge 1$. Hence, we have $H_2(\wh S_g;\R)=0$. It is then clear from the following commutative diagram that the real Hurewicz map $h_2\otimes 1\colon \pi_2(S_g)\otimes \R\to H_2(S_g;\R)$ is trivial:
\[
\begin{tikzcd}
    \pi_2(\wt S_g)\otimes \R\arrow{r}\arrow{d}{\cong} 
    &
    H_2(\wt S_g;\R) \arrow{d}
    \\
    \pi_2(\wh S_g)\otimes \R\arrow{r}\arrow{d}{\cong} 
    &
    H_2(\wh S_g;\R)=0 \arrow{d}
    \\
    \pi_2(S_g)\otimes \R\arrow{r}{h_2\otimes 1} 
    &
    H_2(S_g;\R).
\end{tikzcd}
\]
\end{proof}

This completes the proof of Theorem~\ref{thm: main}. Next, we prove Corollary~\ref{cor: each even dim}.

\begin{proof}[Proof of Corollary~\ref{cor: each even dim}]
    Suppose $g\ge 3$ and $n\ge 1$, and take $A_{g,n}=S_g\times T^{2n}$. Let $\omega_1$ be a near-symplectic form on $S_g$ (which exists --- see Proposition~\ref{prop: betti2}), and let $\omega_2$ be a symplectic form on $T^{2n}$. If $\pr_1\colon A_{g,n}\to S_g$ and $\pr_2\colon A_{g,n}\to T^{2n}$ are the factor projections, then the closed $2$-form $\omega:=\pr_1^*\omega_1+\pr_2^*\omega_2$ is a near-symplectic form on $A_{g,n}$ by definition (\emph{cf.} Subsection~\ref{subsec: near-symplectic}). 
    
Next, let $h_2^1$ be the real Hurewicz map for $S_g$, $h_2^2$ for $T^{2n}$, and $h_2$ for $A_{g,n}$. Then we have the following commutative diagram:
\[
\begin{tikzcd}[column sep=large]
    \left(\pi_2(S_g)\oplus \pi_2(T^{2n})\right)\otimes \R \arrow{r}{h_2^1\oplus h_2^2} \arrow{d}{\cong}
    &
    H_2(S_g;\R)\oplus H_2(T^{2n};\R)\arrow[d,hook]
    \\
    \pi_2(A_{g,n})\otimes \R\arrow{r}{h_2}
    &
    H_2(A_{g,n};\R).
\end{tikzcd}
\]    
It follows immediately that $\pi_2(A_{g,n})\otimes\R\ne 0$ and the map $h_2$ is trivial. 
%Hence, $A_{g,n}$ is a closed smooth totally $c$-symplectically aspherical $(2n+4)$-manifold with non-trivial second homotopy group. 

To see that $A_{g,n}$ is never K\"ahler, recall from~\eqref{eq: amalgam} that the finitely presented group $\pi_1(S_g)$ is a non-trivial amalgamated free product over $\Z_m$. So, Stallings's theorem on ends of groups~\cite{St} implies that $\pi_1(S_g)$ has infinitely many ends. Since $\pi_1(A_{g,n})\cong\pi_1(S_g)\oplus\Z^{2n}$ is an extension of $\pi_1(S_g)$ by the finitely generated group $\Z^{2n}$, it follows from~\cite{Gr,ABR} (see also~\cite{ABCKT}) that $A_{g,n}$ is not K\"ahler a manifold. This completes the proof. 
\end{proof}

For the sake of completeness, we record the following standard fact used in the proof of Proposition~\ref{prop: pi_2 injective}, which may be known to experts.

\begin{lemma}\label{lem: pi2 torsion-free}
    If $M$ is a (not necessarily compact) $4$-manifold without boundary, then $\pi_2(M)$ is torsion-free.
\end{lemma}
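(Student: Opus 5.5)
The plan is to pass to the universal cover and reduce to a statement about homology. Let $p\colon \wt M\to M$ be the universal covering. Then $\pi_2(M)\cong\pi_2(\wt M)$, and by the Hurewicz theorem $\pi_2(\wt M)\cong H_2(\wt M;\Z)$, since $\wt M$ is simply connected. It therefore suffices to show that $H_2(X;\Z)$ is torsion-free for every simply connected $4$-manifold $X$ without boundary, not necessarily compact. We may assume $X$ is connected. Orientability is automatic, because $X$ is simply connected.

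The key step is the universal coefficient theorem, which gives
\[
\operatorname{Tors} H_2(X;\Z)\cong \operatorname{Tors} H^3(X;\Z)
\]
provided the homology groups are finitely generated. To avoid any finiteness assumption, I will use instead the form $\operatorname{Ext}(H_2(X),\Z)\hookrightarrow H^3(X)$ together with duality. For noncompact $X$, Poincar\'e duality holds in the form
\[
H^3_c(X;\Z)\cong H_1(X;\Z)=0 \quad\text{and}\quad H^3(X;\Z)\cong H_1^{\mathrm{lf}}(X;\Z)
\]
(locally finite homology). Compactly supported cohomology pairs with ordinary homology, so this route requires some care.

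A cleaner route is to argue by compact exhaustion. Any torsion class $x\in H_2(X)$ with $kx=0$ is supported, together with the null-homology of $kx$, in a compact codimension-$0$ submanifold $K\subset X$ with boundary, and we may take $K$ connected. We cannot assume $\pi_1(K)=1$, so I will instead use the long exact sequence of $(X,X\setminus \mathrm{Int} K)$ and excision:
\[
H_3(X,X\setminus\mathrm{Int}K)\cong H_3(K,\partial K)\cong H^1(K)
\]
by Lefschetz duality, and $H^1(K)$ is torsion-free. Writing $C=X\setminus\mathrm{Int}K$, this yields an exact sequence
\[
H_3(X,C)\to H_2(C)\to H_2(X)\to H_2(X,C)\cong H^2(K).
\]

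The main obstacle is making this bookkeeping produce torsion-freeness. I expect it is easier to use the standard direct argument instead: for simply connected $X$, $H_1(X)=0$, so $H^2(X;\Z)\cong\operatorname{Hom}(H_2X,\Z)$ is torsion-free, and by duality with compact supports $H_2(X)\cong H^2_c(X)$. Then I need torsion-freeness of $H^2_c(X)$. The universal coefficient theorem for compactly supported cohomology, written as a direct limit over $H^2(X,X\setminus K)$, expresses each term via $\operatorname{Ext}(H_1(X,X\setminus K),\Z)$. Here $H_1(X,X\setminus K)\cong H^3(K)$ by Lefschetz duality, and I would choose the exhaustion so that its torsion vanishes in the limit. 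I would try this limit argument first; if it resists, I would fall back on the direct argument of taking a torsion element, realizing it in a compact piece, and using the intersection pairing with $H_2$ of a compact simply connected thickening.
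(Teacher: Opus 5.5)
Your reduction is the same as the paper's: pass to the universal cover, use $H_2(X)\cong H^2_c(X)$, and note that $H^2(X)\cong\operatorname{Hom}(H_2(X),\Z)$ is torsion-free because $H_1(X)=0$. But the proposal never supplies the step that actually proves torsion-freeness, so there is a genuine gap.

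Each of your routes stops before that step:
\begin{itemize}
\item The exhaustion route ends at ``the main obstacle is making this bookkeeping produce torsion-freeness.''
\item The $H^2_c$ route rests on choosing an exhaustion ``so that its torsion vanishes in the limit,'' and you give no mechanism for this.
\item The fallback is not an argument either. In dimension $4$, attaching neighbourhoods of null-homotopies of loops does not obviously give a compact simply connected thickening, since double points of immersed disks create new fundamental group. Also, the $\Z$-valued intersection pairing vanishes on torsion classes, so it cannot detect them.
\end{itemize}
Rewriting $H_1(X,X\setminus K)$ as $H^3(K)$ via Lefschetz duality actually hides the relevant information. For an abstract compact $4$-manifold $K$, such as $\mathbb{RP}^3\times S^1$ minus a ball, $H^3(K)$ can have torsion. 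The reason it has none here is the simple connectivity of the ambient $X$, not any choice of exhaustion.

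The missing observation is that $H^2(X,X\setminus K)$ is torsion-free for \emph{every} compact $K$, so the direct limit $H^2_c(X)$ is torsion-free too. The paper gets this from the exact sequence $0=H^1(X)\to H^1(X\setminus K)\to H^2(X,X\setminus K)\to H^2(X)$. Here $H^1(X\setminus K)\cong\operatorname{Hom}(H_1(X\setminus K),\Z)$ and $H^2(X)$ are both torsion-free. So the middle group is an extension of a subgroup of a torsion-free group by a torsion-free group, hence torsion-free.

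In your universal-coefficient formulation the same fact reads as follows. The sequence $0=H_1(X)\to H_1(X,X\setminus K)\to H_0(X\setminus K)$ is exact, so $H_1(X,X\setminus K)$ is a subgroup of a free abelian group. It is therefore free, and the $\operatorname{Ext}$ term vanishes.

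This observation would also have completed your exhaustion route. By excision, $H_1(K,\partial K)\cong H_1(X,X\setminus\mathrm{Int}K)$ is free. Hence $\operatorname{Tors}H_2(K)\cong\operatorname{Tors}H^3(K)\cong\operatorname{Tors}H_1(K,\partial K)=0$ for every compact codimension-$0$ submanifold $K\subset X$. A torsion class in $H_2(X)$, together with a chain bounding its multiple, lies in such a $K$, so it vanishes.
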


\begin{proof}
    It suffices to prove that $H_2(\wt M)\cong\pi_2(\wt M)$ is torsion-free. By Poincar\'e duality, $H_2(\wt M)\cong H^2_c(\wt M)$. If $H^2_c(\wt M)$ has torsion, then by the definition of cohomology with compact support, 
    %that \[ H^2_c(\wt M)\cong\lim_{\longrightarrow} H^2(\wt M,\wt M\setminus K), \]
    %where $K\subset \wt M$ are compact subsets. If $H^2_c(\wt M)$ has torsion, 
    there exists $K_0\subset \wt M$ such that $H^2(\wt M,\wt M\setminus K_0)$ has torsion. Then for the pair $(\wt M,\wt M\setminus K_0)$, we have the cohomology long exact sequence
    \[
    \cdots\to H^1(\wt M)\xlongrightarrow{i} H^1(\wt M\setminus K_0)\xlongrightarrow{\theta} H^2(\wt M,\wt M\setminus K_0)\xlongrightarrow{j} H^2(\wt M)\to \cdots,
    \]
    where $H^1(\wt M)\cong 0$, and the groups $H^1(\wt M\setminus K_0)\cong\text{Hom}(H_1(\wt M\setminus K_0),\Z)$ and $H^2(\wt M)\cong\text{Hom}(H_2(\wt M),\Z)$ are torsion-free. Let $0\ne x\in H^2(\wt M,\wt M\setminus K_0)$ be such that $nx=0$ for some $n\ge 2$. Then $j(x)=0$ and so, there exists $0\ne y\in H^1(\wt M\setminus K_0)$ such that $\theta(y)=x$. In particular, $\theta(ny)=0$. But $i$ is the trivial map, so $\theta$ is injective, which implies $ny=0$. This is a contradiction.
\end{proof}

%A similar argument shows that if $M$ is an $n$-manifold for $n\ge 5$ such that $\pi_i(M)\cong 0$ for $2\le i \le n-3$, then $\pi_{n-2}(M)$ is torsion-free.

\section{Sharpness of our results}\label{sec: exceptional}

There are no non-trivial $c$-symplectically aspherical examples with non-trivial second homotopy group in real dimension $2$. Indeed, it is easy to show that an aspherical $c$-symplectic form on a closed $2$-manifold $M$ exists if and only if $M$ is a closed Riemann surface of genus $\ge 1$, in which case $\pi_2(M)=0$. Hence, real dimension $4$ is the smallest possible dimension for the examples constructed in this paper. Moreover, in dimension $4$, our examples are sharp also in terms of the genus, as we explain next.

It is clear from the discussions in Sections~\ref{sec: homotopy of N} and~\ref{sec: proof section} that our techniques do not work for the $4$-manifolds $S_g=SP^2(M_g)\ \#_{\Sigma}\ \ov{SP^2(M_g)}$ in the cases $g\le 2$. In this section, we explain that, in fact, these three manifolds do not enjoy the interesting properties mentioned in Theorem~\ref{thm: main} for their higher genus counterparts.

\begin{prop}\label{prop: trivial pi2}
 For $g\in\{0,2\}$, we have $\pi_2(S_g)=0$, and for $g\in\{0,1\}$, the closed $4$-manifold $S_g$ cannot support an aspherical $c$-symplectic structure.  
\end{prop}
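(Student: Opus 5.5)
The plan is to identify $S_g$ explicitly, or at least its universal cover, in each of the three exceptional genera, using the self-intersection $1-g$ of $\Sigma$ from Lemma~\ref{lem: pd u square}. The second-homotopy claim (for $g\in\{0,2\}$) reduces to the vanishing of $H_2$ of a tree of copies of $\wt N$. The non-existence claim (for $g\in\{0,1\}$) reduces to the cohomological-dimension bound~\eqref{eq: cd}.

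\textbf{Case $g\in\{0,2\}$: $\pi_2(S_g)=0$.} Here $1-g=\pm1$, so the bundle~\eqref{eq: bundle} is the Hopf fibration up to orientation, and $L\cong S^3$. Van Kampen applied to $S_g=N\cup_L\ov N$ then gives $\pi_1(S_g)\cong\pi_1(N)\ast\pi_1(N)$.
\begin{enumerate}[(i)]
\item For $g=0$: $M=\C P^2$, $u=c$ is the class of a line, and $N$ is a $4$-ball. Hence $S_0\cong D^4\cup_{S^3}D^4\cong S^4$ and $\pi_2=0$ immediately.
\item For $g=2$: $\pi_1(L)=1$, so Proposition~\ref{prop: central extension} gives $\pi_1(N)\cong\Z^{4}$. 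Then $\wh N=p^{-1}(N)$ is the universal cover $\wt N$. The proof of Proposition~\ref{prop: about H_2} uses only that $\pi_2(\wh W)\to\pi_2(\wt M)$ is an isomorphism (because $u$ generates the spherical classes) and that $H_2(L)=0$. Both hold for $g=2$, so $H_2(\wt N)=0$.
\item The universal cover $\wt S_2$ is a tree-like union of copies of $\wt N$ and $\wt{\ov N}$, glued along $3$-spheres (the Bass--Serre tree of the free product). I would exhaust it by finite subtrees and run Mayer--Vietoris inductively, exactly as in the proof that $H_2(\wh S_g;\R)=0$. Since $H_1(S^3)=H_2(S^3)=0$, each finite union has $H_2=0$. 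Passing to the direct limit gives $H_2(\wt S_2)=0$, hence $\pi_2(S_2)=\pi_2(\wt S_2)=H_2(\wt S_2)=0$ by Hurewicz.
\end{enumerate}

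\textbf{Case $g\in\{0,1\}$: no aspherical $c$-symplectic structure.} I would invoke~\eqref{eq: cd}: an aspherical $c$-symplectic form on a closed $4$-manifold forces $\cd_\R(\pi_1)\ge 4$.
\begin{enumerate}[(i)]
\item For $g=0$: $S_0\cong S^4$ is simply connected, so~\eqref{eq: cd} fails trivially. In fact $H^2(S^4)=0$, so there is not even a $c$-symplectic form.
\item For $g=1$: $\mu_2\colon SP^2(M_1)\to T^2$ is an $S^2$-bundle (Remark~\ref{rem: dragichi result}), and I would take $\Sigma$ to be a fiber. This is consistent with the fiber being spherical and having self-intersection $0=1-g$. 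Then $W$ is the bundle restricted to a disk, $N$ is the $S^2$-bundle over $T^2\setminus D^2$, and $L\cong S^2\times S^1$. The gluing $\phi$ can be taken fiber-preserving, so $S_1$ is an $S^2$-bundle over $(T^2\setminus D^2)\cup_{S^1}(T^2\setminus D^2)=M_2$. The homotopy sequence then gives $\pi_1(S_1)\cong\pi_1(M_2)$, whose real cohomological dimension is $2<4$. So~\eqref{eq: cd} rules out any aspherical $c$-symplectic form.
\item Alternatively for $g=1$: $H^2(S_1;\R)$ is spanned by the class dual to the fiber and a class pulled back from the base. Any class with nonzero square must evaluate nontrivially on the spherical fiber class.
\end{enumerate}

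\textbf{Main obstacle.} I expect the delicate point to be justifying that the chosen sphere $\Sigma$ may be taken to be the standard one: a line for $g=0$, a bundle fiber for $g=1$. The construction only fixes $\Sigma$ up to its homology class. I would argue that any smooth sphere representing $u$ has the same normal Euler number, so $L$ is determined. For $g=1$ I would use the bundle structure and the fact that $\pi_1$-computations via van Kampen depend only on $\pi_1(N)$ and $\pi_1(L)$, which agree for the fiber choice. If this isotopy issue cannot be avoided, the $g=1$ argument would instead compute $\pi_1(S_1)$ directly from the central extension of Proposition~\ref{prop: central extension} and the amalgam $\pi_1(N)\ast_{\pi_1(L)}\pi_1(N)$, then bound its cohomological dimension.
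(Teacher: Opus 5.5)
Your argument is correct, but for $g\in\{1,2\}$ it takes a different route from the paper.

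For $g=0$ the two agree, except that the paper does not need $\Sigma$ to be a line. Since $L\cong S^3$, van Kampen and the homology sequence of $(\C P^2,N)$ show that $N$ is a contractible manifold bounded by $S^3$, hence a topological $4$-ball by Freedman. This settles your ``main obstacle'' in that case.

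For $g=2$ the paper identifies $N$ with the punctured $T^4$, so that $S_2\cong T^4\#T^4$, and reads off $\pi_2(S_2)=0$ from the $3$-skeleton of $B(\Z^4\ast\Z^4)$. You instead show $H_2(\wt N)=0$ and run Mayer--Vietoris over the Bass--Serre tree in $\wt S_2$. This is a self-contained substitute for the skeleton argument and mirrors the paper's treatment of $\wh S_g$. One step needs a better justification. The input you import from Proposition~\ref{prop: about H_2}, that $H_2(\wh W)\to H_2(\wt M)$ is an isomorphism, is a statement about $\pi_2(M)$ as a $\Z[\Z^4]$-module. It does not follow from $u$ spanning the real spherical classes. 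Justify it instead from the universal cover of $T^4\#\ov{\C P^2}$, namely $\R^4$ blown up at the lattice points, whose $H_2$ is free on the lifts of the exceptional sphere. Here you take $\Sigma$ to be that sphere, as the paper implicitly does. At that point the two routes essentially meet.

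For $g=1$ the paper uses Proposition~\ref{prop: central extension} to assert $\cd_{\Z}(\pi_1(N))=3$, and then bounds the cohomological dimension of $\pi_1(N)\ast_{\Z}\pi_1(N)$. You instead use the $S^2$-bundle $S_1\to M_2$, which the paper itself derives in Proposition~\ref{prop: g=1}, to get $\pi_1(S_1)\cong\pi_1(M_2)$, of cohomological dimension $2$. Here your route is the sounder one. With $\Sigma$ a fiber, $\pi_1(N)\cong F_2$ has trivial center, so it is not a central extension of $\Z^2$ by $\Z$; in fact $\cd(\pi_1(N))=1$. The paper's bound $\cd_{\R}(\pi_1(S_1))\le 3$ still holds, but your identification gives it without that detour. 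Your Leray--Hirsch alternative is more direct still: it exhibits an embedded sphere on which every $c$-symplectic class is nonzero, and so bypasses \eqref{eq: cd} altogether.
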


\begin{proof}
In the case $g=0$, we have $SP^2(M_0)\cong\C P^2$. So, the compact $4$-manifold $N\cong\C P^2\setminus\Sigma$ is homotopy equivalent to a $4$-ball with boundary $S^3$, and hence, homeomorphic to a $4$-ball due to Freedman~\cite{FQ}. Here, $S^3$ is the total space of the bundle~\eqref{eq: bundle} since the self-intersection of $\Sigma$ in $\C P^2$ is $1$ by Lemma~\ref{lem: pd u square}. Thus, $S_0$ is homeomorphic to the connected sum $S^4$. In particular, $\pi_2(S_0)\cong 0$. 

Next, in the case $g=2$, we have $SP^2(M_2)\cong T^4\#\ov{\C P^2}$, which is the K\"ahler blow-up of $T^4$ at a point. Then $N$ is homeomorphic to the once-punctured $4$-torus $\mathring{T^4}$ with boundary $L\cong S^3$, where the latter is the total space of the bundle~\eqref{eq: bundle} since the self-intersection of $\Sigma$ in $SP^2(M_2)$ is $-1$. Thus, $S_2\cong T^4\# T^4$. It is clear that 
\[
    \pi_2(S_2)\cong \pi_2(B(\Z^4\ast\Z^4)^{(3)})\cong\pi_2(B(\Z^4\ast\Z^4))\cong 0,
    \]
where $B(\Z^4\ast\Z^4)^{(3)}$ is the $3$-skeleton of the classifying CW complex $B(\Z^4\ast\Z^4)$ of $\pi_1(T^4\# T^4)\cong\Z^4\ast\Z^4$.

Finally, for the case $g=1$, observe that Proposition~\ref{prop: central extension} gives a central extension
\[
1\to\Z\rightarrow\pi_1(N)\rightarrow\Z^2\to 1
\]
due to which $\cd_{\Z}(\pi_1(N))= 3$.
%see, for instance,~\cite[Theorem~5.5]{Bieri}. 
Since $\pi_1(S_1)=\pi_1(N) \ast_{\Z} \pi_1(N)$ by~\eqref{eq: amalgam}, we get $\cd_{\R}(\pi_1(S_1))\le 3$. But, by~\eqref{eq: cd}, the fundamental group of a closed $c$-symplectically aspherical $4$-manifold $Y$ satisfies $\cd_{\R}(\pi_1(Y))\ge 4$. Hence, neither $S_1$ nor $S_0$ can support an aspherical $c$-symplectic structure.
\end{proof}

We conclude with the following observation.

\begin{prop}\label{prop: g=1}
    The closed oriented $4$-manifold $S_1$ is symplectic, but neither K\"ahler nor symplectically aspherical.
\end{prop}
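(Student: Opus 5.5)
The plan has three parts, matching the three assertions: symplecticity by Gompf's fiber sum, failure of symplectic asphericity by evaluating the form on the symplectic sphere $\Sigma$, and non-K\"ahlerness from the fundamental group via~\eqref{eq: amalgam}. For $g=1$, Lemma~\ref{lem: pd u square} gives $\Sigma\cdot\Sigma=1-g=0$, so the normal bundle of $\Sigma$ is trivial.

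\textbf{Symplectic.} I would use Remark~\ref{rem: dragichi result}. The Abel--Jacobi map $\mu_2\colon SP^2(M_1)\to T^2$ is an $S^2$-bundle, and I would first check that $\Sigma$ can be taken to be a fiber of $\mu_2$. Both classes are spherical, and the real spherical classes are spanned by $u$, so the fiber class is a multiple of $u$. Both have self-intersection $0$, and both pair to $1$ with the torus $a_1\cdot b_1$ (Lemma~\ref{lem: intersections}), so the fiber class equals $u$. By Thurston's construction~\cite{Thu}, both $SP^2(M_1)$ and $\ov{SP^2(M_1)}$ carry orientation-compatible symplectic forms in which the fibers are symplectic. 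The fiber $\Sigma$ has self-intersection $0$ on both sides, so the normal Euler numbers are opposite, trivially. Gompf's symplectic generalized connected sum~\cite{Go1} then gives a symplectic form on $S_1=SP^2(M_1)\,\#_\Sigma\,\ov{SP^2(M_1)}$. Concretely, $S_1$ is an $S^2$-bundle over $T^2\#T^2$, and Thurston's argument can also be applied to this bundle directly, since its fiber is non-zero in real homology.

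\textbf{Not symplectically aspherical.} The fiber sphere survives in $S_1$ as a symplectic surface, for instance a parallel copy of $\Sigma$ inside $N$. Hence $\int_{S^2}\omega>0$ for the Gompf form, so this particular form is not aspherical. To rule out every symplectic form, I would argue as follows. The fiber class is a real spherical class of $S_1$ with non-zero real homology class, because it pairs to $1$ with a section-type surface. For any symplectic $\omega$ on an $S^2$-bundle over a surface, the $\omega$-area of the fiber class is non-zero. The standard route is Taubes--McDuff: $S_1$ has $b_2^+=1$, and $S_1$ is not minimal/ruled-type free, so the fiber, being represented by an embedded sphere of square $0$ with non-trivial class, has non-zero area for every symplectic form. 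Alternatively, one could argue via Lalonde--McDuff that it contains $J$-holomorphic spheres in that class.

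\textbf{Not K\"ahler, and the expected obstacle.} I would mimic Proposition~\ref{prop: no complex} using~\eqref{eq: amalgam}: if $\pi_1(S_1)\cong\pi_1(N)\ast_{\pi_1(L)}\pi_1(N)$ is a non-trivial amalgam, then~\cite{Gr,ABR} forbid it from being K\"ahler. This is the step I expect to be the main obstacle. When $\Sigma$ is a fiber, $N$ is an $S^2$-bundle over a punctured torus and $\pi_1(L)\to\pi_1(N)$ is the boundary loop of that punctured torus. The resulting amalgam is just a genus-$2$ surface group, which \emph{is} K\"ahler, and $S^2$-bundles over $M_2$ are ruled surfaces. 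So the first thing I would do is recheck exactly which sphere $\Sigma$ and which gluing the construction uses at $g=1$, and whether the amalgam in~\eqref{eq: amalgam} is genuinely non-trivial in the sense required by~\cite{Gr,ABR}. I would only proceed with the non-K\"ahler argument once that is confirmed.
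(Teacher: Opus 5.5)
Your first two parts are sound. For symplecticity, the Gompf sum of Thurston forms works. Your closing remark (Thurston applied directly to the $S^2$-bundle over $T^2\#T^2\cong M_2$) is precisely the paper's argument.

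For non-asphericity the paper argues differently, via \eqref{eq: cd} and $\cd_\R\pi_1(S_1)\le 3$. Your fiber-area route is better, but drop the appeal to Taubes--McDuff and Lalonde--McDuff; the phrase ``not minimal/ruled-type free'' does not justify anything. An elementary argument suffices. $H_2(S_1;\R)$ is spanned by the fiber $F$ and a section $B$ with $F\cdot F=0$ and $F\cdot B=1$. So if $\int_F\omega=0$, then $[\omega]$ is a multiple of $PD(F)$ and $[\omega]^2=0$. Since $F$ is spherical, no $c$-symplectic form on $S_1$ is aspherical.

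This is also more robust than the paper's route, which obtains $\cd\le 3$ from Proposition~\ref{prop: central extension}. At $g=1$ that proposition is false: $\pi_1(N)$ is free of rank $2$, and $\pi_1(L)$ is generated by the commutator $[a,b]$, which is neither central nor normal. The paper's conclusion survives only because $\pi_1(S_1)\cong\pi_1(M_2)$ has cohomological dimension $2$.

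The obstacle you flagged in the third part is real and cannot be overcome: the non-K\"ahler clause is false, and the paper's proof of it is wrong. You can settle your doubt completely.

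\begin{itemize}
\item Take $\Sigma$ to be a fiber of $\mu_2$, which is the choice made both by you and by the paper's proof. Then $N=\mu_2^{-1}(T_0)$ with $T_0$ a once-punctured torus.
\item Since $T_0\simeq S^1\vee S^1$ and $\mathrm{Diff}^+(S^2)\simeq SO(3)$ is connected, this $S^2$-bundle is trivial, so $N\cong T_0\times S^2$.
\item Because $\phi$ is the identity, $S_1=N\cup_\phi\ov N$ is the double of $N$. Hence $S_1\cong D(T_0)\times S^2=M_2\times S^2$.
\item This is a product of Riemann surfaces and thus K\"ahler. The orientation does not matter, since $M_2\times S^2$ has an orientation-reversing self-diffeomorphism.
\end{itemize}

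The paper's own proof exhibits $S_1$ as an $S^2$-bundle over $M_2$, and every such bundle is a ruled surface, so that proof is internally inconsistent.

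The appeal to ``the lines of'' Proposition~\ref{prop: no complex} fails exactly where you suspected. For $g\ge 3$ that argument uses that the edge group $\Z_m$ is finite, so Stallings' theorem gives infinitely many ends, which \cite{Gr,ABR} rule out. An amalgam over the infinite group $\Z$ carries no such obstruction, and here $\pi_1(N)\ast_\Z\pi_1(N)$ is just the genus-$2$ surface group.

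So do not try to supply a non-K\"ahler argument. The correct statement is that $S_1$ is K\"ahler (hence symplectic) but not symplectically aspherical.
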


\begin{proof}
When $g=1$, the Abel--Jacobi map $\mu_2\colon SP^2(M_1)\to T^2$ is an $S^2$-bundle (\emph{cf}. Subsection~\ref{sec: SP} and~\cite{OS}). The self-intersection of $\Sigma$ in $SP^2(M_1)$ is $0$, so the Euler class of the normal bundle to $\Sigma$ is zero and hence, the corresponding $S^1$-bundle~\eqref{eq: bundle} is trivial. Thus, $L\cong S^1\times S^2$ is the boundary of $N\cong SP^2(M_1)\setminus\Sigma$. In view of the bundle structure, removing the $2$-sphere $\Sigma$ from $SP^2(M_1)$ is parallel to removing a small $2$-ball $B^2$ from $T^2$, where $\pa(B^2\times S^2)\cong L\cong \pa N$. 

Then gluing $N$ and $\ov{N}$ along $L$ to form $S_1$ is parallel to gluing two copies of the once-punctured $2$-torus $\mathring{T^2}$ along $\pa(B^2)\cong S^1$ to form the closed Riemann surface $M_2$ of genus $2$. Thus, there is a smooth locally trivial fiber bundle 
\[
S^2\hookrightarrow S_1\to M_2,
\]
where $[S^2]\ne 0$ in $H_2(S_1;\R)$. As mentioned in Remark~\ref{rem: dragichi result}, Thurston's technique~\cite{Thu} (see also~\cite[Page~85]{Dra}) produces a symplectic form on $S_1$. That $S_1$ is not K\"ahler follows along the lines of the proof of Proposition~\ref{prop: no complex} in view of the fact that $\pi_1(S_1)\cong\pi_1(N)\ast_{\Z}\pi_1(N)$, and that it is not symplectically aspherical follows from Proposition~\ref{prop: trivial pi2}
\end{proof}

\end{document}